\numberwithin{equation}{section}
\newtheorem{thm}{Theorem}[section]
\newtheorem{lemma}[thm]{Lemma}
\newtheorem{remark}[thm]{Remark}
\newtheorem{definition}[thm]{Definition}
\newcommand{\mb}{\mathbf}
\newcommand{\bx}{{\mb X}}
\newcommand{\ba}{{\mb A}}
\newcommand{\bw}{{\mb W}}
\newcommand{\bi}{{\mb I}}
\newcommand{\bb}{{\mb B}}
\newcommand{\bq}{{\mb Q}}
\newcommand{\be}{{\mb e}}
\newcommand{\re}{{\rm E}}
\newcommand{\E}{{\rm E}}
\newcommand{\rtr}{{\rm tr}}
\renewcommand{\(}{\left(}
\renewcommand{\)}{\right)}
\newcommand{\lj}{\left|}
\newcommand{\rj}{\right|}
\newcommand{\bd}{\mb D}
\newcommand{\vk}{\varepsilon_{k}(z)}
\newcommand{\zk}{\zeta_{k}(z)}
\begin{document}

\title[Gaussian fluctuations for linear statistics of Wigner ensembles] {Gaussian fluctuations for linear spectral statistics of Wigner beta ensembles}

\author{Yanqing, Yin}

\address{}
\email{yinyq799@nenu.edu.cn}

\subjclass{Primary 15B52, 60F15, 62E20;
Secondary 60F17}

\maketitle

\begin{abstract}
As an important topic in Mathematical Physics and statistics, random matrices theory has found uses in many aspects of modern physics and multivariate analysis. This paper is to investigate the Gaussian fluctuations for linear spectral statistics (LSS) of  Wigner beta ensembles. We first establish a central limit theorem (CLT) for LSS of Wigner quaternion matrices, then give a general CLT for Wigner $\beta$ ensembles.

{\bf Keywords: }  Gaussian fluctuations, log-gases, Central limit theorem, Large dimension, Linear spectral statistics, Random matrix theory.

\end{abstract}

\section{Introduction}
Random matrices theory is known as an important topic in Mathematical Physics. It is shown to be inter-related with log-gases and the Calogero-Sutherland model. As an early introduced matrix models, the Gaussian $\beta$ ensembles have been considered by a large number of authors. Here the special cases $\beta=1,2,4$, known as Dyson's three-fold-way \cite{dyson1962threefold}, correspond to Gaussian Orthogonal Ensemble (GOE),
Gaussian Unitary Ensemble (GUE) and Gaussian Symplectic Ensemble (GSE) respectively. And the entries of a certain matrix in the above three ensembles are real, complex and quaternion standard gaussian variables. Since we can compute the explicit density functions of the joint distribution of eigenvalues, lots of properties of $G(O/U/S)E$ have been deduced by means of orthogonal polynomial.  More details can be found in \cite{mehta2004random}.
On the other hand, the central concept of the random matrix theory, as envisioned by E. Wigner, is the hypothesis
that the distributions of eigenvalue spacings of large complicated quantum systems are
universal in the sense that they depend only on the symmetry classes of the physical systems but
not on detailed structures. This concept is also called ``universality". By dropping the gaussian assumption, one consider the more general ensembles, the so called Wigner ($\beta$) ensembles where $\beta=1,2,4$ (or equivalently, the wigner real, complex, quaternion ensembles). For details in this direction, we refer the reader to \cite{tao2010random,Bourgade2014,Johansson2001,Soshnikov1999} and references therein.

Also known as central limit theorems, global fluctuations for linear statistics have been of interest to the
random matrix community for a long time. A variety of models and eigenvalue distributions have
been studied from this point of view \cite{Girko2001,Anderson2006,Sosoe2013,Benaych-Georges2014,Lytova2009,Anderson2008,bai2004clt,Bai2005}. In this paper, as an extension of the results in \cite{Bai2005}, we will show the CLT for linear statistics of Wigner quaternion ensemble and thus fulfilling the corresponding CLT for Wigner ($\beta$) ensembles.

\section{Some Definitions And Main Results}
We begin by a list of definitions and background that will be used in this paper.
Set an ordered basis
\begin{displaymath}
\mathbbm e =\mb I_2= \left( \begin{array}{cc}
1&0\\
0&1\\
\end{array} \right), \quad
\mathbbm{i} = \left( \begin{array}{cc}
i&0\\
0&- i\\
\end{array} \right), \quad
\mathbbm j = \left( \begin{array}{cc}
0&1\\
-1&0\\
\end{array}\right), \quad
\mathbbm{k} = \left( \begin{array}{cc}
0&i\\
i&0\\
\end{array}\right),
\end{displaymath}
where $i=\sqrt{-1}$ denotes the usual imaginary unit (here and in the rest of the paper, $\mb I_n$ denote the $n$ dimensional identity matrix), then a quaternion can be represented as
$$q = a \cdot \mathbbm e + b \cdot \mathbbm i + c \cdot \mathbbm j + d \cdot \mathbbm k = \left( {\begin{array}{*{20}{c}}
\alpha &\beta \\
{ - \overline \beta }&{\overline \alpha }
\end{array}} \right), $$
where $a, b, c, d$ are  real  and $\alpha  = a + b i$, $\beta = c + di$ are complex.
The quaternion conjugate of $q$ is defined as
$$q^Q = a \cdot \mathbbm  e - b \cdot \mathbbm  i - c \cdot \mathbbm  j - d \cdot \mathbbm  k = \left( {\begin{array}{*{20}{c}}
{\bar \alpha }&{ - \beta }\\
{\bar \beta }&\alpha
\end{array}} \right)=q^*,$$
where $\(\cdot\)^*$ denote conjugate transform of a matrix.

We also write
$$qq^*=q^*q=\({a^2+b^2+c^2+d^2}\)\mb I_2\triangleq\|q\|_{Q}^2\mb I_2.$$

A Wigner quaternion matrix of size $n$ is a quaternion self-dual Hermitian matrix where the upper-triangle entries  are independent quaternion random variables. From \cite{zhang1997}, we know that a quaternion Hermitian matrix has 2n pairwise real eigenvalues. Suppose $\lambda_1^{\mb Q},\lambda_1^{\mb Q},\cdots,\lambda_n^{\mb Q},\lambda_n^{\mb Q}$ are the 2n real eigenvalues of a $n\times n$ quaternion self-dual Hermitian matrix (a $2n\times 2n$ Hermitian matrix) $\mb Q$, then we call $\lambda_1^{\mb Q}\mb I_2,\lambda_2^{\mb Q}\mb I_2,\cdots,\lambda_n^{\mb Q}\mb I_2$ are the n quaternion eigenvalues of $\mb Q$.

In this paper, we define
\begin{align*}
\E(\|q\|_Q^k)&=\E\(\(\sqrt{a^2+b^2+c^2+d^2}\)^k\).
\end{align*}
as the k-th norm moment of the quaternion random variable $q$.

For any function of bounded variation $G$ on the real line, its Stieltjes transform is defined by
$$m_G(z)=\int\frac{1}{y-z}dG(y),~~z\in\mathbb{C}^{+}\equiv\{z\in\mathbb{C}:\Im z>0\}.$$

In \cite{Yin2016}, it is shown that under a Lindeberg type condition as $n\to \infty$, the Empirical Spectral Distribution (ESD) of a Wigner quaternion matrix whose entries being zero means and unit variances, convergence to the  standard semicircular law $F$ with density function
\begin{align*}
F'(x)=\begin{cases}
\frac1{2\pi}\sqrt{4-x^2},& \ {\rm if}\ |x|\le2\\
0,\quad&{\rm otherwise}.
\end{cases}
\end{align*}

Below are definitions for two kinds of matrices that will be used in the proof of this paper.
\begin{definition}
A matrix is called Type-T matrix if it has the following structure:$$\left( {\begin{array}{*{20}{c}}
t&0\\
0&t
\end{array}} \right).$$
\end{definition}

\begin{definition} A matrix is called Type-\uppercase\expandafter{\romannumeral1} matrix if it has the following structure:
\[\left( {\begin{array}{*{20}{c}}
{{t_1}}&0&{{a_{12}}}&{{b_{12}}}& \cdots &{{a_{1n}}}&{{b_{1n}}}\\
0&{{t_1}}&{{c_{12}}}&{{d_{12}}}& \cdots &{{c_{1n}}}&{{d_{1n}}}\\
{{d_{12}}}&{ - {b_{12}}}&{{t_2}}&0& \cdots &{{a_{2n}}}&{{b_{2n}}}\\
{ - {c_{12}}}&{{a_{12}}}&0&{{t_2}}& \cdots &{{c_{2n}}}&{{d_{2n}}}\\
 \vdots & \vdots & \vdots & \vdots & \ddots & \vdots & \vdots \\
{{d_{1n}}}&{ - {b_{1n}}}&{{d_{2n}}}&{ - {b_{2n}}}& \cdots &{{t_n}}&0\\
{ - {c_{1n}}}&{{a_{1n}}}&{ - {c_{2n}}}&{{a_{2n}}}& \ldots &0&{{t_n}}
\end{array}} \right).\]
\end{definition}
We note that in this paper, we will use $\mb 0$ denote a two dimensional zero matrix. We also use $C$ to stand for a constant that may take different values from one appearance to others.

Let $\mu(f)$ denote the integral of a function $f$ with respect to a signed measure $\mu$ and $\mathcal{U}$ be an open set of the complex plane that contains the interval $[-2,2]$ (the support of the standard semicircular law $F$). Define $\mathcal{A}$ to be the set of analytic functions $f: \mathcal{U}\mapsto\mathbb{C}$ and $F_n$ to be the empirical spectral distribution (ESD) of a wigner $\beta$ matrix $\bw_n$. We then consider the empirical process $G_n:=\{G_n(f)\}$ indexed by $\mathcal{A}$, i.e.,
\begin{align}\label{al3}
G_n(f)=n\int f(x)d\(F_n(x)-F(x)\), \quad f\in\mathcal{A}.
\end{align}

Now, we are in position to present our main theorem.

Define $\{T_k\}$ to be the family of Tchebychev polynomials and
\begin{align}
\psi_l(f)=\frac{1}{2\pi}\int_{-\pi}^{\pi}f(2\cos(\theta))\exp^{il\theta}d\theta=\frac{1}{\pi}\int_{-1}^1f(2s)T_l(s)\frac{1}{\sqrt{1-s^2}}ds.
\end{align}
for any integer $l\geq 0$. We have the following theorem:
\begin{thm}\label{th1}
Assume that $\bw_n=n^{-1}\bx_n=n^{-1}(x_{jk})$ is a Wigner quaternion matrix,
\begin{itemize}
\item[(a)] For all $j$, $\re\|x_{jj}\|_Q^2=\sigma^2$, for $j<k$, $\re\|x_{jk}\|_Q^2=1$, and for $j\le k$, $\re x_{jk}=\mb 0$.
\item[(b)] For $j\neq k$, write $x_{jk} =\left(
        \begin{array}{cc}
          \alpha_{jk} & \beta_{jk} \\
          -\bar \beta_{jk} & \bar \alpha_{jk} \\
        \end{array}
      \right),
$
then $\re\|x_{jk}\|_Q^4=M$ and
\begin{align*}
&\re|\alpha_{jk}|^2=1/2,\quad \re|\beta_{jk}|^2=1/2,\quad\re\alpha_{jk}^2=0,\\
 &\re\beta_{jk}^2=0,\quad\re(\alpha_{jk}\beta_{jk})=\re(\alpha_{jk}\bar\beta_{jk})=0.
 \end{align*}

\item[(c)] For any $\eta>0$, as $n\to\infty$,
\begin{align*}
\frac1{\eta^4n^2}\sum_{j,k}\re\(\|x_{jk}\|_Q^4I\(\|x_{jk}\|_Q\ge\eta\sqrt n\)\)\to0.
\end{align*}
\end{itemize}
Then the spectral empirical process $G_n=G_n(f)$ indexed by the set of analytic functions $\mathcal{A}$ converges weakly in finite dimension to a Gaussian process $G:=\{G(f):f\in\mathcal{A}\}$ with mean function $\re G(f)$ given by
\begin{align*}
\frac{-\{f(2)+f(-2)\}}{8}+\frac{1}{4}\psi_0(f)+\(\sigma^2-\frac12\)\psi_2(f)+\(M-\frac32\)\psi_4(f)
\end{align*}
and the covariance function $\mb Cov(f,g):=\re\(G(f)-\re G(f)\)\(G(g)-\re G(g)\)$ given by
\begin{align*}
\sigma^2\psi_1(f)\psi_1(g)+(2M-1)\psi_2(f)\psi_2(g)+\frac{1}{2}\sum_{l=3}^{\infty}l\psi_l(f)\psi_l(g)\\
=\frac{1}{4\pi^2}\int_{-2}^2\int_{-2}^2f'(t)g'(s)V(t,s)dtds,
\end{align*}
where
\begin{align*}
V(t,s)=\(\sigma^2-\frac{1}{2}+\(\frac M2-\frac34\)ts\)\sqrt{\(4-t^2\)\(4-s^2\)}+\frac12\log\(\frac{4-ts+\sqrt{\(4-t^2\)\(4-s^2\)}}{4-ts-\sqrt{\(4-t^2\)\(4-s^2\)}}\).
\end{align*}
\end{thm}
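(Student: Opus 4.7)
The plan is to adapt the Stieltjes-transform/martingale framework of \cite{Bai2005} to the quaternion-Hermitian setting, exploiting the Type-T/Type-I block structures introduced above in order to track the factor of two arising from the doubled quaternion eigenvalues. Since $f$ is analytic on a neighborhood $\mathcal{U}$ of $[-2,2]$, I first fix a simple closed contour $\mathcal{C}\subset\mathcal{U}$ enclosing $[-2,2]$ and write
$$G_n(f) = -\frac{1}{2\pi i}\oint_{\mathcal{C}} f(z)\,M_n(z)\,dz, \qquad M_n(z) := n\bigl(m_{F_n}(z)-m_F(z)\bigr).$$
The CLT for $G_n$ then reduces to finite-dimensional weak convergence of the random analytic process $\{M_n(z):z\in\mathcal{C}\}$ together with a uniform $L^2$ bound that allows passing the limit under the integral. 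Assumption (c) permits a truncation of each entry at level $\eta_n\sqrt n$ with $\eta_n\downarrow 0$, followed by recentering and rescaling so that (a)--(b) continue to hold up to negligible perturbation; this is the starting point for all moment bounds that follow.

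Write $\bd(z) := \bw_n-z\bi_{2n}$ and let $\E_k$ denote conditional expectation given the first $k$ quaternion rows/columns of $\bx_n$. The centered part admits the martingale decomposition
$$M_n(z)-\E M_n(z) = \sum_{k=1}^n \zk, \qquad \zk := (\E_k-\E_{k-1})\rtr\bd(z)^{-1}.$$
A quaternion Schur/Sherman--Morrison inversion formula, applicable because principal quaternion minors of $\bw_n$ remain of Type-I, rewrites each $\zk$ in terms of the quadratic form $\mb q_k^*\bd_k(z)^{-2}\mb q_k$, where $\mb q_k$ is the off-diagonal portion of the $k$-th quaternion column and $\bd_k(z)$ is the corresponding minor. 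Combining (a)--(b) with the truncation bound yields the Lyapunov estimate $\sum_k\E|\zk|^4\to 0$ uniformly on $\mathcal{C}$, so Billingsley's martingale CLT delivers joint Gaussianity of $(M_n(z_1),\ldots,M_n(z_r))$ in finite dimensions, the covariance being the limit of $\sum_k\E_{k-1}[\zeta_k(z_1)\zeta_k(z_2)]$.

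Under (b), the variance of $\mb q_k^*A\mb q_k$ around $\rtr A$ decomposes into three contributions: a universal Gaussian piece whose contour integral produces $\tfrac12\sum_{l\ge 3}l\psi_l(f)\psi_l(g)$, with the prefactor $\tfrac12$ reflecting the doubled spectrum; a piece $\sigma^2\psi_1(f)\psi_1(g)$ from the diagonal variance; and a kurtosis piece $(2M-1)\psi_2(f)\psi_2(g)$ from $\E\|x_{jk}\|_Q^4=M$. Substituting the explicit $m_F(z)$ and carrying out the resulting double contour integral yields the kernel $V(t,s)$ of the theorem. For the mean, I iterate $\rtr\bd(z)^{-1} = \sum_k(\xi_k-z-\mb q_k^*\bd_k(z)^{-1}\mb q_k)^{-1}$ and expand the denominator to second order around its $\mathcal{F}_{k-1}$-conditional mean; the $O(1)$ correction, integrated against $f$, produces the four summands $-\{f(2)+f(-2)\}/8$, $\tfrac14\psi_0(f)$, $(\sigma^2-\tfrac12)\psi_2(f)$ and $(M-\tfrac32)\psi_4(f)$, coming respectively from edge effects at $\pm 2$, the bulk of the semicircle, the diagonal variance $\sigma^2$, and the off-diagonal kurtosis $M$. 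Tightness on $\mathcal{C}$ follows from the $L^2$ bound of the martingale step together with uniform boundedness of $\E M_n(z)$.

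\textbf{Main obstacle.} The real work is the quaternion bookkeeping: every Sherman--Morrison identity, quadratic-form concentration inequality, and cumulant computation has to be redone at the level of $2\times 2$ quaternion blocks so that the doubled-eigenvalue factor is tracked consistently through all constants. This is precisely where the Type-T/Type-I structures take over from the Hermitian-scalar identities of \cite{Bai2005}, and it is what turns the $\beta=1,2$ constants of that paper into the particular combinations $\sigma^2-\tfrac12$, $M-\tfrac32$, and $2M-1$ appearing in Theorem~\ref{th1}.
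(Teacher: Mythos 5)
Your outline follows the same route as the paper's proof (Cauchy-integral reduction to the Stieltjes-transform process, truncation at $\eta_n\sqrt n$, martingale decomposition, Billingsley's martingale CLT, tightness), but as written it is a plan rather than a proof: every step that actually generates the constants in the statement is asserted, not derived. The decisive ingredient is a second-moment identity for quaternion quadratic forms over Type-\uppercase\expandafter{\romannumeral1} matrices, namely
$\re\(\rtr\bx^*\ba\bx-\rtr\ba\)\(\rtr\bx^*\bb\bx-\rtr\bb\)=\(\re\|x_1\|_Q^4-\tfrac32\)\sum_j\rtr\mathbbm{a}_{j,j}\rtr\mathbbm{b}_{j,j}+\rtr\ba\bb$,
whose proof uses exactly the cross-moment conditions in (b) and which is what converts $M$ into the combinations $2M-3$ in the covariance and $M-\tfrac32$ in the mean; your ``diagonal variance $+$ kurtosis $+$ universal Gaussian piece'' decomposition presupposes this identity without establishing it. Equally, the limit in probability of the conditional covariance $\sum_k\re_{k-1}[\cdot]$ is the heart of the matter: after the moment identity one is left with a partial resolvent trace (the paper's $S_k$), for which one must derive and solve a self-consistent equation of the form $z_1S_k=-2m(z_2)(1-k/n)-m(z_2)(1-k/n)S_k-m(z_1)S_k+o(1)$ and integrate over $k/n$; only then does the term $\frac1{2(1-m(z_1)m(z_2))^2}$ --- equivalently the series $\frac12\sum_{l\ge3}l\psi_l(f)\psi_l(g)$ after contour integration --- appear. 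Calling this a ``universal Gaussian piece'' is not an argument, and nothing in your sketch produces it. The mean function has the same status: the second-order expansion around the conditional mean requires proving $\re|m_n(z)-\re m_n(z)|^3=o(1)$ uniformly on the contour and evaluating $\sum_k\re\rtr\varepsilon_k^2(z)$, again via the same quaternion lemma.

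The reduction to the contour is also incomplete. The identity $G_n(f)=-\frac1{2\pi i}\oint_{\mathcal C}f(z)M_n(z)\,dz$ holds only when no eigenvalue of $\bw_n$ lies outside $\mathcal C$, so after truncation you need the extreme-eigenvalue estimate ${\rm P}\(|\lambda_{ext}(\bw_n)|\ge1+a/2\)=o(n^{-t})$ (quoted in the paper from the quaternion literature): it justifies the representation, yields bounded moments of $\|\bd_k(z)\|$ on the contour, and controls the portions of $\mathcal C$ within distance $O(\varepsilon_n/n)$ of the real axis, where no uniform $L^2$ bound on $M_n$ is available without it; your appeal to ``a uniform $L^2$ bound that allows passing the limit under the integral'' silently assumes all of this. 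Finally, a small but telling slip: since the $2n$ real eigenvalues come in pairs, $m_n(z)=\frac1{2n}\rtr\bd^{-1}(z)$, so the martingale decomposition reads $M_n(z)-\re M_n(z)=\frac12\sum_k(\re_{k-1}-\re_k)\rtr\(\bd^{-1}(z)-\bd_k^{-1}(z)\)$ (in the paper's filtration convention), not $\sum_k(\E_k-\E_{k-1})\rtr\bd^{-1}(z)$; the factor-of-two bookkeeping that you yourself identify as the main obstacle is dropped in the one displayed formula where it matters.
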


Combining with the existing results \cite{Bai2005}, we shall establish the following general CLT.

\begin{thm}\label{thz}
Assume that $\bw_n=n^{-1}\bx_n=n^{-1}(x_{jk})$ is a Wigner $\beta$ matrix,
\begin{itemize}
\item[(a)] For all $j$,
\begin{align*}
  \left\{
            \begin{array}{ll}
              \re|x_{jj}|^2=\sigma^2, & \hbox{$\beta$=1,2;} \\
              \re\|x_{jj}\|_Q^2=\sigma^2, & \hbox{$\beta$=4,}
            \end{array}
          \right.
\end{align*}
for $j<k$,
\begin{align*}
  \left\{
            \begin{array}{ll}
              \re|x_{jk}|^2=1, \quad \re|x_{jk}|^4=M, & \hbox{$\beta$=1,2;} \\
              \re\|x_{jk}\|_Q^2=1, \quad \re\|x_{jk}\|_Q^4=M, & \hbox{$\beta$=4,}
            \end{array}
          \right.
\end{align*}
and for $j\le k$
\begin{align*}
  \left\{
            \begin{array}{ll}
              \re x_{jk}=0, & \hbox{$\beta$=1,2;} \\
              \re x_{jk}=\mb 0, & \hbox{$\beta$=4.}
            \end{array}
          \right.
\end{align*}

\item[(b)] For $j\neq k$, $\beta=2$, $\E x_{j,k}^2=0$.
For $j\neq k$, $\beta=4$, write
$x_{jk} =\left(
        \begin{array}{cc}
          \alpha_{jk} & \beta_{jk} \\
          -\bar \beta_{jk} & \bar \alpha_{jk} \\
        \end{array}
      \right),
$
then
\begin{align*}
&\re|\alpha_{jk}|^2=1/2,\quad \re|\beta_{jk}|^2=1/2,\quad\re\alpha_{jk}^2=0,\\
 &\re\beta_{jk}^2=0,\quad\re(\alpha_{jk}\beta_{jk})=\re(\alpha_{jk}\bar\beta_{jk})=0.
 \end{align*}
\item[(c)] For any $\eta>0$, as $n\to\infty$,
\begin{align*}
  \left\{
            \begin{array}{ll}
             \frac1{\eta^4n^2}\sum_{j,k}\re\(|x_{jk}|^4I\(|x_{jk}|\ge\eta\sqrt n\)\)\to0, & \hbox{$\beta$=1,2;} \\
              \frac1{\eta^4n^2}\sum_{j,k}\re\(\|x_{jk}\|_Q^4I\(\|x_{jk}\|_Q\ge\eta\sqrt n\)\)\to0, & \hbox{$\beta$=4.}
            \end{array}
          \right.
\end{align*}

\end{itemize}
Then the spectral empirical process $G_n=G_n(f)$ indexed by the set of analytic functions $\mathcal{A}$ converges weakly in finite dimension to a Gaussian process $G:=\{G(f):f\in\mathcal{A}\}$ with mean function $\re G(f)$ given by
\begin{align*}
\(\frac{2}{\beta}-1\)\(\frac{\{f(2)+f(-2)\}}{4}-\frac{\psi_0(f)}{2}\)+\(\sigma^2-\frac{2}{\beta}\)\psi_2(f)+\(M-1-\frac{2}{\beta}\)\psi_4(f)
\end{align*}
and the covariance function $\mb Cov(f,g):=\re\(G(f)-\re G(f)\)\(G(g)-\re G(g)\)$ given by
\begin{align*}
\sigma^2\psi_1(f)\psi_1(g)+2(M-\frac{2}{\beta})\psi_2(f)\psi_2(g)+\frac{2}{\beta}\sum_{l=3}^{\infty}l\psi_l(f)\psi_l(g)\\
=\frac{1}{4\pi^2}\int_{-2}^2\int_{-2}^2f'(t)g'(s)V(t,s)dtds,
\end{align*}
where
\begin{align*}
V(t,s)=\(\sigma^2-\frac{2}{\beta}+\(M-1-\frac{2}{\beta}\)\frac{ts}{2}\)\sqrt{\(4-t^2\)\(4-s^2\)}+\frac{2}{\beta}\log\(\frac{4-ts+\sqrt{\(4-t^2\)\(4-s^2\)}}{4-ts-\sqrt{\(4-t^2\)\(4-s^2\)}}\).
\end{align*}
\end{thm}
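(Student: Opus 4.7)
The statement is not a new CLT proved from scratch but a unification of three case-by-case results under a single $\beta$-dependent parametrization. The plan is therefore to split on the value of $\beta$ and invoke, for each case, a CLT that is already available: for $\beta=1$ (real) and $\beta=2$ (complex) one appeals to the Bai--Yao CLT in \cite{Bai2005}, and for $\beta=4$ (quaternion) one appeals to Theorem \ref{th1} proved earlier in this paper. All the probabilistic content (tightness, Stieltjes-transform expansions, martingale CLT, truncation under the Lindeberg condition (c)) is already contained in these two sources, so the task reduces to a bookkeeping exercise: verify that the generic formulas in Theorem \ref{thz} reduce term-by-term to the three known formulas when $\beta\in\{1,2,4\}$.

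\textbf{Step-by-step plan.} First I would check that the moment hypotheses (a), (b), (c) in Theorem \ref{thz} specialize precisely to the hypotheses of the invoked theorems. For $\beta=1,2$, conditions (a)--(c) are exactly the Bai--Yao conditions (second- and fourth-moment identification plus a Lindeberg-type tail bound), and for $\beta=2$ the extra requirement $\E x_{jk}^2=0$ is imposed to match the standard complex-Wigner normalization. For $\beta=4$, conditions (a)--(c) coincide with those of Theorem \ref{th1}. Next, I would substitute $\beta=4$ (so $2/\beta=1/2$) into the mean formula and check the boundary contribution $(2/\beta-1)\{f(2)+f(-2)\}/4 = -\{f(2)+f(-2)\}/8$, the bulk contribution $(2/\beta-1)(-\psi_0(f)/2)=\psi_0(f)/4$, the $\psi_2$-coefficient $\sigma^2-2/\beta = \sigma^2-1/2$, and the $\psi_4$-coefficient $M-1-2/\beta = M-3/2$; these match Theorem \ref{th1} exactly. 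Do the same for the covariance: $2(M-2/\beta)=2M-1$ matches the $\psi_2\psi_2$-coefficient, $(2/\beta)\sum_{l\ge3}l\psi_l\psi_l=\tfrac12\sum_{l\ge3}l\psi_l\psi_l$ matches the tail sum, and in the kernel $V(t,s)$ the combination $(M-1-2/\beta)\,ts/2=(M/2-3/4)\,ts$ and the factor $2/\beta$ in front of the logarithm both match. For $\beta=1,2$, the corresponding substitution $2/\beta\in\{2,1\}$ must reproduce the mean/covariance formulas of Theorem~1.1 of \cite{Bai2005}; this is a direct algebraic check.

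\textbf{Equivalence of the double-integral form of the covariance.} The second representation of $\mb{Cov}(f,g)$ through $V(t,s)$ should be derived from the $\psi_l$-series representation using the standard identity
\begin{align*}
\frac{1}{2\pi}\int_{-1}^{1}\frac{T_l(s)}{\sqrt{1-s^2}}\,ds = \text{(moments)}
\end{align*}
and the generating function $\sum_{l\ge1}\frac{1}{l}\cos(l\theta)\cos(l\varphi)=-\tfrac14\log(4\sin^2\tfrac{\theta-\varphi}{2}\cdot 4\sin^2\tfrac{\theta+\varphi}{2})$, followed by the change of variables $t=2\cos\theta$, $s=2\cos\varphi$ and an integration by parts that moves the derivatives onto $f$ and $g$. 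Since this derivation is purely analytic in the Tchebychev coefficients and does not depend on $\beta$, the identity in Theorem \ref{th1} and in \cite{Bai2005} directly extends to the $\beta$-dependent kernel written in Theorem \ref{thz}; I would carry the $\beta$-dependence along as a parameter through the same computation.

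\textbf{Expected main obstacle.} The actual proof carries essentially no probabilistic difficulty beyond what is already done in Theorem \ref{th1} and \cite{Bai2005}; the real hazard is a normalization mismatch. Specifically, in the quaternion case the eigenvalues of $\bw_n$ are counted with multiplicity two (each quaternion eigenvalue yields a pair of equal real eigenvalues), so one must decide whether $F_n$ counts the $2n$ real eigenvalues or the $n$ quaternion eigenvalues, and the factor $n$ in front of \eqref{al3} must be consistent with that choice. A parallel care is required in the definitions of $\sigma^2$ and $M$: for $\beta=1,2$ these are ordinary moments of a scalar entry, while for $\beta=4$ they are defined via the quaternion norm $\|\cdot\|_Q$. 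Checking that under the conventions fixed in Section~2 the three cases really do plug into the same $(2/\beta)$-formula is the one non-cosmetic step; once it is done, the theorem follows immediately from the two cited CLTs.
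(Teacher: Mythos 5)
Your proposal matches the paper's own route: the paper offers no separate argument for Theorem \ref{thz} beyond the remark that it follows by ``combining with the existing results'' of Bai--Yao for $\beta=1,2$ with Theorem \ref{th1} for $\beta=4$, which is exactly your case split plus the $2/\beta$ bookkeeping check. Your substitution checks (e.g.\ $2/\beta=1/2$ giving $\sigma^2-\tfrac12$, $M-\tfrac32$, $2M-1$, and the factor $\tfrac12$ before the logarithm) are correct, so the proposal is sound and essentially identical in approach.
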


\section{Proof of Theorem \ref{th1}}

Let $\mathcal{C}$ be the contour made by the boundary for the rectangle with vertices $(\pm a\pm iv_0)$, where $a>2$ and $1\ge v_0>0$. We can always assume that the constants $a-2$ and $v_0$ are sufficiently small so that $\mathcal{C}\subset\mathcal{U}$.

By Cauchy integral formula, we have
\begin{align*}
G_n(f)=-\frac1{2\pi i}\oint_{\mathcal{C}}n\left[m_n(z)-m(z)\right]dz
\end{align*}
where $m_n(z)$ and $m(z)$ are Stieltjes transform of $\bw_n$ and the semicircular law $F$, respectively. The equality above may not be correct when some eigenvalues of $\bw_n$ run outside the contour. Thus we need to consider the corrected version, i.e.
\begin{align*}
G_n(f)I(B_n^c)=-\frac1{2\pi i}I(B_n^c)\oint_{\mathcal{C}}n\left[m_n(z)-m(z)\right]dz
\end{align*}
where $B_n=\{|\lambda_{ext}(\bw_n)|\ge1+a/2\}$ and $\lambda_{ext}$ denotes the smallest or largest eigenvalue of the matrix $\bw_n$. Notice that in \cite{yin2014eigen} it is shown that after truncation and renormalization, for any $a>2$ and $t>0$,
\begin{align}\label{al4}
{\rm P}(B_n)=o(n^{-t}),
\end{align}
we know that this difference will not matter in the proof. The mentioned representation reduces the proof of  Theorem \ref{th1} to showing that the process $M_n:=\{M_n(z),z\notin[-2,2]\}$, where
\begin{align*}
M_n(z)=n\left[m_n(z)-m(z)\right],
\end{align*}
converges to a Gaussian process $M(z)$, $z\notin[-2,2]$. We shall show this conclusion by establishing the following theorem.

\begin{thm}\label{th2}
Under conditions in Theorem \ref{th1}, the process $\{M_n(z);\mathbb{C}_0\}$ where $\mathbb{C}_0=\{z=u+iv:|v|\ge v_0\}$, converges weakly to a Gaussian process $\{M(z);\mathbb{C}_0\}$ satisfying for $z\in\mathbb{C}_0,$
\begin{align*}
\re M(z)=\(1+m'(z)\){m^3(z)}\({\sigma^2}-1-\frac12m'(z)+\(M-\frac{3}2\)m^2(z)\)
\end{align*}
and, for $z_1,z_2\in\mathbb{C}_0$,
\begin{align*}
{\rm Cov}\(M(z_1),M(z_2)\)=&m'(z_1)m'(z_2)\Bigg(\sigma^2-1/2+\({2M-{3}}\)m(z_1)m(z_2)\\
&\quad\quad+\frac1{2\(1-{m(z_1)m(z_2)}\)^2}\Bigg).
\end{align*}

\end{thm}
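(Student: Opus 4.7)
The plan is to adapt the Schur-complement/martingale approach of Bai--Yao to the $2n\times 2n$ self-dual block structure of a Wigner quaternion matrix. Split
\begin{align*}
M_n(z)=\bigl[nm_n(z)-n\re m_n(z)\bigr]+\bigl[n\re m_n(z)-nm(z)\bigr]=:M_n^{(1)}(z)+M_n^{(2)}(z).
\end{align*}
The random part $M_n^{(1)}$ will be shown to converge finite-dimensionally to a centered Gaussian process with the stated covariance, while $M_n^{(2)}$ will supply the stated mean. Tightness on $\mathbb{C}_0$ follows from uniform $L^2$ bounds on the increments obtained below, and the bound \eqref{al4} together with the Lindeberg condition (c) allows truncation of the entries at level $\eta\sqrt{n}$ without changing the limit.

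For $M_n^{(1)}(z)$, let $\re_k$ denote conditional expectation given the first $k$ quaternion columns and let $\bw_n^{(k)}$ be the matrix obtained by removing the $k$-th quaternion row/column. Using $nm_n(z)=\tfrac12\rtr(\bw_n-z\bi_{2n})^{-1}$, the standard martingale decomposition
\begin{align*}
M_n^{(1)}(z)=\sum_{k=1}^n(\re_k-\re_{k-1})\Bigl[\tfrac12\rtr(\bw_n-z\bi_{2n})^{-1}-\tfrac12\rtr(\bw_n^{(k)}-z\bi_{2n-2})^{-1}\Bigr]
\end{align*}
expresses each martingale increment via Schur's complement in terms of the quadratic forms $\zk=\bx_k^*(\bw_n^{(k)}-z\bi)^{-2}\bx_k$ and $\vk=\bx_k^*(\bw_n^{(k)}-z\bi)^{-1}\bx_k-\re[\bx_k^*(\bw_n^{(k)}-z\bi)^{-1}\bx_k]$. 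Standard quadratic-form concentration reduces the problem to the martingale CLT of Billingsley, and the limiting conditional covariance is computed by replacing $\zk$ and $\vk$ by their deterministic Stieltjes-transform equivalents: the $\sigma^2$ piece comes from the diagonal contribution, the $(2M-3)m(z_1)m(z_2)$ piece from the fourth-moment correction, and the $\tfrac12(1-m(z_1)m(z_2))^{-2}$ piece from resumming the off-diagonal covariances, the prefactor $\tfrac12$ being the quaternion analogue of the $2/\beta$ prefactor familiar from the $\beta=1,2$ cases.

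For the deterministic bias $M_n^{(2)}(z)$, I would average the Schur expansion to obtain an approximate fixed-point equation for $\re m_n(z)$ with an explicit $n^{-1}$ correction term, then write $\re m_n(z)=m(z)+n^{-1}b(z)+o(n^{-1})$ and linearise around $m$. The inhomogeneous term collects three contributions: (i) the diagonal variance $\sigma^2-1$, (ii) the fourth-moment term $M-\tfrac32$ from the quartic part of the quadratic form, and (iii) a self-duality correction $-\tfrac12 m'(z)$ specific to the quaternion case. Solving the linearised equation and using the identities $m'(z)=m^2(z)/(1-m^2(z))$ and $1+m'(z)=1/(1-m^2(z))$ then recovers the stated formula for $\re M(z)$.

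The main obstacle will be the book-keeping inside the quadratic forms $\bx_k^*A\bx_k$: since each quaternion entry is a $2\times 2$ block, such a form decomposes into a trace-type part plus cross terms involving $\re\alpha_{jk}^2$, $\re\beta_{jk}^2$, $\re(\alpha_{jk}\beta_{jk})$, and $\re(\alpha_{jk}\bar\beta_{jk})$. The moment identities in Theorem~\ref{th1}(b) are exactly what eliminate the cross terms, leaving only trace-type contributions, each of which carries an extra factor of two from the pairing of eigenvalues. Tracking these factors carefully, together with the rank-two perturbation produced when a whole quaternion column is removed, is where the quaternion case genuinely departs from the real/complex treatment of Bai--Yao and where the bulk of the technical work will lie.
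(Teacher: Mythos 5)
Your plan follows essentially the same route as the paper: the same split of $M_n(z)$ into the deterministic bias $n[\re m_n(z)-m(z)]$ (handled by linearising the self-consistent equation, which produces the $(1+m'(z))$ factor together with the $\sigma^2-1$, $-\tfrac12 m'(z)$ and $(M-\tfrac32)m^2(z)$ contributions) and the martingale part $n[m_n(z)-\re m_n(z)]$ (handled by the quaternion-column Schur-complement decomposition, the martingale CLT of Lemma \ref{cll7}, and the quadratic-form covariance identity of Lemma \ref{cll6}, which is exactly the block bookkeeping you flag as the main quaternion-specific work), with tightness obtained from second-moment bounds on increments. The outline is consistent with the paper's proof and contains no wrong step; the remaining work is the technical estimates you already identify, in particular the resummation yielding the $\tfrac12\(1-m(z_1)m(z_2)\)^{-2}$ term.
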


\begin{remark}
  The process $\{M(z);\mathbb{C}_0\}$ in Theorem \ref{th2} can be taken as a restriction of a process $\{M(z)\}$ defined on the whole complex plane except the real axis since the mean and covariance functions of $M(z)$ are independent of $v_0$. Then, by the symmetry that $M(\bar z)=\overline{M(z)}$ and the continuity of the mean and covariance functions of $M(z)$ , one may extend the process to $\{M(z);\Re z\notin[-2,2]\}$.
\end{remark}

Define a slowly varying sequence of positive constants $\varepsilon_n$ that convergence to 0. Split the contour $\mathcal{C}$ as the union $\mathcal{C}_u+\mathcal{C}_l+\mathcal{C}_r+\mathcal{C}_0$, where
\begin{align*}
\left\{
         \begin{array}{ll}
           \mathcal{C}_l=\{z=-a+iy:\varepsilon_nn^{-1}<|y|\le v_0\},  \\
           \mathcal{C}_r=\{z=a+iy:\varepsilon_nn^{-1}<|y|\le v_0\},  \\
           \mathcal{C}_0=\{z=\pm a+iy,|y|\le\varepsilon_nn^{-1}\},
         \end{array}
       \right.
\end{align*}
by Theorem \ref{th2}, we get the weak convergence
$
\int_{\mathcal{C}_u}M_n(z)dz\Rightarrow\int_{\mathcal{C}_u}M(z)dz.
$
To prove Theorem \ref{th1}, it is sufficient to show that, for $j=l,r,0$,
\begin{align}\label{al1}
\lim_{v_0\downarrow0}\limsup_{n\to\infty}\re\lj\int_{\mathcal{C}_j}M_n(z)I\(B_n^c\)dz\rj=0
\end{align}
and
\begin{align}\label{al2}
\lim_{v_1\downarrow0}\re\lj\int_{\mathcal{C}_j}M(z)dz\rj=0.
\end{align}
Estimate (\ref{al2}) can be verified directly by the mean and variance functions of $M(z)$. The proof of (\ref{al1}) will be postponed to subsection \ref{jl0}.

\subsection{Truncation and Renormalization}

Note that condition $(c)$ in Theorem \ref{th1} implies the existence of a sequence $\eta_n\downarrow0$ such that
\begin{align*}
\frac1{\eta_n^4n^2}\sum_{j,k}\re\left[\|x_{jk}\|_Q^4I\(\|x_{jk}\|_Q\ge\eta_n\sqrt n\)\right]\to0.
\end{align*}
Here $\eta_n\to0$ may be assumed to be as slow as desired. For definiteness, we assume that $\eta_n>1/\log n$.

At first, truncate the variables as $\hat x_{jk}=x_{jk}I\(\|x_{jk}\|_Q\le\eta_n\sqrt n\)$. Then normalize them by setting $\tilde x_{jk}=\(\hat x_{jk}-\re\hat x_{jk}\)/\sigma_{jk}$ for $j\neq k$ and $\tilde x_{jj}=\sigma\(\hat x_{jj}-\re\hat x_{jj}\)/\sigma_{jj}$, where $\sigma_{jk}$ is the standard deviation of $\hat x_{jk}$. Let $\widehat F_n$ and $\widetilde F_n$ be the ESD of the random matrices $\widehat\bw_n=\(\frac1{\sqrt n}\hat x_{jk}\)$ and $\widetilde\bw_n=\(\frac1{\sqrt n}\tilde x_{jk}\)$, respectively. According to (\ref{al3}), we similarly define $\widehat G_n$ and $\widetilde G_n$.

To begin with, we have
\begin{align*}
{\rm P}\(F_n\neq\widehat F_n\)\le&\sum_{j,k}{\rm P}\(\|x_{jk}\|_Q>\eta_n\sqrt n\)\\
\le&\frac1{\eta_n^4n^2}\sum_{j,k}\re\left[\|x_{jk}\|_Q^4I\(\|x_{jk}\|_Q>\eta_n\sqrt n\)\right]\to0
\end{align*}
which implies
\begin{align*}
{\rm P}\(G_n\neq\widehat G_n\)\le{\rm P}\(F_n\neq \widehat F_n\)=o(1).
\end{align*}
Next, we will compare $\widetilde G_n$ with $\widehat G_n$. Denote by $\hat\lambda_{nj}$ and $\tilde\lambda_{nj}$ the $j$th largest eigenvalues of $\widehat\bw_n$ and $\widetilde\bw_n$, respectively. Using Lemma \ref{cll1}, we have
\begin{align*}
\re\lj\widehat G_n(f)-\widetilde G_n(f)\rj^2=&\re\lj\sum_{j=1}^n\(f(\hat\lambda_{nj})-f(\tilde\lambda_{nj})\)\rj^2=\re\lj\sum_{j=1}^n f'(\xi_j)\(\hat\lambda_{nj}-\tilde\lambda_{nj}\)\rj^2\\
\le& Cn\re\sum_{j=1}^n\lj\hat\lambda_{nj}-\tilde\lambda_{nj}\rj^2\le Cn\re\sum_{jk}\lj\lj n^{-1/2}(\hat x_{jk}-\tilde x_{jk})\rj\rj_Q^2\\
\le&C\bigg[\sum_{j\neq k}\(|1-\sigma_{jk}^{-1}|^2\re\|\hat x_{jk}\|_Q^2+\|\re \hat x_{jk}\|_Q^2\sigma_{jk}^{-2}\)\\
&+\sum_{j}\(|1-\sigma\sigma_{jj}^{-1}|^2\re\|\hat x_{jj}\|_Q^2+\|\re \hat x_{jj}\|_Q^2\sigma^2\sigma_{jj}^{-2}\)\bigg]\\
\le&C\sum_{jk}\(n^{-2}\eta_n^{-4}+n^{-3}\eta_n^{-6}\)\re^2\|x_{jk}\|_Q^4I\(\|x_{jk}\|_Q>\eta_n\sqrt n\)\to0.
\end{align*}

Therefore, we conclude that
\begin{align*}
\widetilde G_n(f)=G_n(f)+o_p(1).
\end{align*}
This yields that we only need to find the limiting distribution of $\{\widetilde G_n(f_j),j=1,\cdots,\kappa\}$. Hence, in what follows, we shall assume the underlying variables are truncated at $\eta_n\sqrt n$, centralized, and renormalized. For simplicity, we shall suppress all sub- or superscripts on the variables and  assume that
$$\|x_{jk}\|_Q\le\eta_n\sqrt n, \ \re x_{jk}=0, \ \re\|x_{jk}\|_Q^2=1, \ {\rm and} \ \re\|x_{jk}\|_Q^4=+o(1).$$

\subsection{Mean function of $M_n(z)$}

Let $\bq_k =(x_{1k}',\cdots,x_{(k-1)k}',x_{(k+1)k}',\cdots,x_{nk}')_{(2n-2)\times2}'$ denote the $k$th quaternion column of $\bx_n$ with $k$th quaternion elements removed. Let $\bw_{nk}$ be the matrix obtained from $\bw_n$ with the $k$th quaternions column and row removed. Moreover, write
\begin{align*}
&\bd(z)= (\bw_n-z\bi_{2n})^{-1}, \ \bd_k(z)=(\bw_{nk}-z\bi_{2n-2})^{-1},\\
&\vk = n^{-1/2}x_{kk}-n^{-1}\bq_k^*\bd_k\bq_k + \re m_n(z)*\bi_2,\\
&\zk= \((z + \re m_n(z)) *\bi_2-\vk\)^{-1}, \ t_n(z)=\(z + \re m_n(z)\)^{-1}.
\end{align*}

Yin, Bai and Hu in \cite{Yin2016} derived
\begin{align}\label{al16}
\re m_n(z)=&-\frac12\(z-\delta_n(z)-\sqrt{(z+\delta_n(z))^2-4}\)\\
=&\delta_n(z)+m\(z+\delta_n(z)\)\notag
\end{align}
where
\begin{align*}
\delta_n(z) = -\frac{t_n(z)}{2n}\sum_{k=1}^n\re\rtr\(\vk\zk\).
\end{align*}
Hence, for $z\in\mathbb{C}_0$ we have
\begin{align*}
\re M_n(z)=\(1+m'(z)\)\(1+o(1)\)n\delta_n(z)
\end{align*}
and
\begin{align*}
n\delta_n(z) =-\frac{t_n(z)}{2}\sum_{k=1}^n\re\rtr\(\vk\zk\).
\end{align*}
This yields that it is suffices to show the limit of $n\delta_n(z)$ for $z\in\mathbb{C}_0$. Here we show a stronger result that the limit of $n\re\delta_n(z)$ holds uniformly in $z\in \mathcal{C}_n=\mathcal{C}_u+\mathcal{C}_l+\mathcal{C}_r$.

To begin with, we claim that the moments of $\|\bd(z)\|$, $\|\bd_k(z)\|$, and $\|\bd_{k,j}(z)\|$ are bounded in n and $z\in\mathcal{C}_n$. Without loss of generality, we only give the proof for $\re\|\bd_1(z)\|^j$ and the others are similar.  In fact,
\begin{align*}
\|\bd_1(z)\|=&\|\bd_1(z)\|I(B_n^c)+\|\bd_1(z)\|I\(B_n\)\\
\le&\max\{\frac2{a-2},\frac1{v_0}\}+n\varepsilon_n^{-1}I\(B_n\)\le C+n^2I\(B_n\)\notag.
\end{align*}
 Then using (\ref{al4}), we have for any positive $j$ and suitably large $t$
\begin{align*}
\re\|\bd_1(z)\|^j
\le&C_1+C_2n^{2j}n^{-t}\le C_j.
\end{align*}

Note that
\begin{align}\label{al17}
n\delta_n(z) =&-\frac{t_n^2(z)}{2}\sum_{k=1}^n\re\rtr\vk-\frac{t_n^2(z)}{2
}\sum_{k=1}^n\re\rtr\(\varepsilon_k^2(z)\zk\)\\
 =&-\frac{t_n^2(z)}{2}\sum_{k=1}^n\re\rtr\vk-\frac{t_n^3(z)}{2
}\sum_{k=1}^n\re\rtr\varepsilon_k^2(z)-\frac{t_n^3(z)}{2
}\sum_{k=1}^n\re\rtr\(\varepsilon_k^3(z)\zk\)\notag\\
\triangleq&\mathcal{I}_1+\mathcal{I}_2+\mathcal{I}_3.\notag
\end{align}
We begin with the estimation of $\mathcal{I}_3$. From \cite{yin2014rate}, it can be verified that
\begin{align}\label{al10}
|\tilde t_n(z)|<1
\end{align}
along the same line, where $\tilde t_n(z)=\(z+m_n(z)\)^{-1}$. Using Lemma \ref{cll3}, we get
\begin{align*}
\rtr\zeta_k^{-1}(z)=4\left[\rtr\zeta_k(z)\right]^{-1}.
\end{align*}
Suppose $\lj\rtr\zeta_k(z)\rj>4$, then
\begin{align*}
\lj\rtr\tilde\varepsilon_k(z)\rj=\lj\rtr\(\zeta_k^{-1}(z)-\tilde t_n^{-1}(z)\bi_2\)\rj>1
\end{align*}
where
\begin{align*}
\tilde\varepsilon_k(z)=\frac1{\sqrt n}x_{kk}-\frac1n\bq_k^*\bd_k(z)\bq_k+m_n(z)\bi_2.
\end{align*}

Note that
\begin{align}\label{al5}
\left\|\frac1{\sqrt n}x_{kk}\right\|_Q\le\eta_n\to0
\end{align}
and
\begin{align}\label{al6}
&\lj\rtr\bd_k^{-1}(z)-\rtr\bd^{-1}(z)\rj I_{B_n^c}\\
\le&2\left[\sum_{k=1}^{n-1}\frac{|\lambda_j-\lambda_{kj}|}{|\(\lambda_j-z\)\(\lambda_{kj}-z\)|}+\frac1{|\lambda_n-z|}\right]I_{B_n^c}\notag\\
\le&2\max\left\{v_0^{-1},2/(a-2)\right\}^2\left[\sum_{k=1}^{n-1}\(\lambda_j-\lambda_{kj}\)+1\right]I_{B_n^c}\notag\\
\le&C[\lambda_1-\lambda_n+1]I(B_n^c)\le C(a+3)\notag
\end{align}
where $\lambda_j$ and $\lambda_{kj}$ are the eigenvalues of $\bw_n$ and $\bw_{nk}$ in decreasing order, respectively.

From (3.22) in \cite{Yin2016}, it is known that
\begin{align}\label{al11}
\lj\rtr\zeta_k\rj\le2v^{-1}.
\end{align}
If $|\rtr H(z)|\le n^{\theta}$ uniformly in $z\in\mathcal{C}_n$ for some $\theta>0$, where $H(z)$ is a $2\times 2$ matrix, then by (\ref{al11}), Lemma \ref{cll3}, and Lemma \ref{cll4}, we get for suitably large $t$ and $l$
\begin{align}\label{al8}
&\re\lj\rtr\big(\zeta_k(z)H(z)\big)\rj=\frac12\re\lj\rtr\zeta_k(z)\rtr H(z)\rj\\
\le&2\re\lj\rtr H(z)\rj+n^{\theta}v^{-1}{\rm P}\(\lj\rtr\zeta_k(z)\rj>4\)\notag\\
\le&2\re\lj\rtr H(z)\rj+n^{2+\theta}{\rm P}\(\lj\rtr\tilde\varepsilon_k(z)\rj>1\)\notag\\
\le&2\re\lj\rtr H(z)\rj+n^{2+\theta}{\rm P}\(\lj\rtr\(\bq_k^*\bd_k\bq_k\)-\rtr\bd_k\rj>\frac n2,B_{nk}^c\)+n^{2+\theta}{\rm P}\(B_{nk}\)\notag\\
\le&2\re\lj\rtr H(z)\rj+Cn^{2+\theta-l}\re\lj\rtr\(\bq_k^*\bd_k\bq_k\)-\rtr\bd_k\rj^lI\(B_{nk}^c\)+n^{-t}\notag\\
\le&2\re\lj\rtr H(z)\rj+Cn^{2+\theta-l}\(n^{l/2}+n^{l-1}\eta_n^{2l-4}\)+n^{-t}\le2\re\lj\rtr H(z)\rj+o(n^{-t})\notag
\end{align}
uniformly in $z\in\mathcal{C}_n$.

Now, let us apply the above inequality to prove $\mathcal{I}_3=o(1)$ uniformly in $z\in\mathcal{C}_n$. Choose $H(z)=\varepsilon_k^3(z)$. By Lemma \ref{cll3}, we have
\begin{align}\label{al12}
|\rtr \varepsilon_k^3(z)|=\frac14|\rtr\varepsilon_k(z)|^3\le 2\eta_n^3+\frac{2n^3\eta_n^6}{v^3}+\frac2{v^3}\le Cn^9.
\end{align}
Hence, applying (\ref{al8}), one only needs to prove that
\begin{align*}
\re|\rtr\varepsilon_k(z)|^3=o(1)
\end{align*}
uniformly in $z\in\mathcal{C}_n$ and $k\le n$.

Using Lemma \ref{cll4}, it follow that
\begin{align}\label{al7}
\re\lj\frac1n\bigg(\rtr\bq_k^*\bd_k(z)\bq_k-\rtr\bd_k(z)\bigg)\rj^3\le C\(n^{-3/2}+n^{-1}\eta_n^2\)=o(1).
\end{align}
Combining with (\ref{al5}), (\ref{al6}), and (\ref{al7}), we get
\begin{align}\label{al9}
\re|\rtr\tilde\varepsilon_k(z)|^3=o(1)
\end{align}
uniformly in $z\in\mathcal{C}_n$. From $\varepsilon_k(z)=\tilde\varepsilon_k(z)-\big(m_n(z)-\re m_n(z)\big)\bi_2$, it suffices to show that
\begin{align*}
\re|m_n(z)-\re m_n(z)|^3=o(1)
\end{align*}
uniformly in $z\in\mathcal{C}_n$.

Let $\re_k(\cdot)$ denote the conditional expectation given $\{x_{jl},j,l>k\}$, then we have
\begin{align*}
&m_n(z)-\re m_n(z)=\frac1{2n}\sum_{k=1}^n\(\re_{k-1}\rtr\bd(z)-\re_{k}\rtr\bd(z)\)\\
&\qquad\qquad=\frac1{2n}\sum_{k=1}^n\(\re_{k-1}-\re_{k}\)\rtr\(\bd(z)-\bd_k(z)\)=\frac1{2n}\sum_{k=1}^n\gamma_k(z).
\end{align*}
Applying Lemma \ref{cll2} and Burkholder inequality (Lemma \ref{cll5}), it yields that
\begin{align*}
&\re\lj m_n(z)-\re m_n(z)\rj^3=\frac1{8n^3}\re\lj\sum_{k=1}^n \gamma_k(z)\rj^3\le Cn^{-3}\re\lj\sum_{k=1}^n \gamma_k(z)\rj^3\\
&\qquad\quad\le Cn^{-3}\re\(\sum_{k=1}^n\lj \gamma_k(z)\rj^2\)^{3/2}\le Cn^{-5/2}\sum_{k=1}^n\re\lj \gamma_k(z)\rj^3\\
&\qquad\quad\le Cn^{-5/2}\sum_{k=1}^n\re\lj \(\re_{k-1}-\re_k\)\rtr\(\bigg(\bi_2+\frac1n\bq_k^*\bd_k^2(z)\bq_k\bigg)\zeta_k(z)\)\rj^3\\
&\qquad\quad\le Cn^{-5/2}\sum_{k=1}^n\re\lj \(\re_{k-1}-\re_k\)\tilde t_n(z)\rtr\bigg(\bi_2+\frac1n\bq_k^*\bd_k^2(z)\bq_k\bigg)\rj^3\\
&\qquad\quad+ Cn^{-5/2}\sum_{k=1}^n\re\lj \(\re_{k-1}-\re_k\)\tilde t_n(z)\rtr\(\bigg(\bi_2+\frac1n\bq_k^*\bd_k^2(z)\bq_k\bigg)\zeta_k(z)\tilde\varepsilon_k(z)\)\rj^3.
\end{align*}
where the last inequality uses the fact $\zeta_k(z)=\tilde t_n(z)\bi_2+\tilde t_n(z)\zeta_k(z)\tilde\varepsilon_k(z)$. By (\ref{al10}) and Lemma \ref{cll4}, one gets
\begin{align*}
&\sum_{k=1}^n\re\lj \(\re_{k-1}-\re_k\)\tilde t_n(z)\rtr\bigg(\bi_2+\frac1n\bq_k^*\bd_k^2(z)\bq_k\bigg)\rj^3\\
=&\frac1{n^3}\sum_{k=1}^n\re\lj\re_{k-1}\tilde t_n(z)\bigg(\rtr\bq_k^*\bd_k^2(z)\bq_k-\rtr\bd_k^2(z)\bigg)\rj^3\\
\le&\frac C{n^2}\(n^{3/2}+\eta_n^2n^2\)=o(1).
\end{align*}
Using (\ref{al12}) and Lemma \ref{cll3}, we get
\begin{align*}
&\lj\rtr\(\bigg(\bi_2+\frac1n\bq_k^*\bd_k^2(z)\bq_k\bigg)^3\tilde\varepsilon_k^3(z)\)\rj
=\frac1{16}\lj\rtr\bigg(\bi_2+\frac1n\bq_k^*\bd_k^2(z)\bq_k\bigg)\rj^3\lj\rtr\tilde\varepsilon_k(z)\rj^3\\
\le& Cn^9\lj\rtr\bigg(\bi_2+\frac1n\bq_k^*\bd_k^2(z)\bq_k\bigg)\rj^3\le Cn^9\bigg(2+\frac{2n\eta_n^2n}{nv^2}\bigg)^3\le Cn^{18}.
\end{align*}
Employing (\ref{al10}), (\ref{al8}), and Lemma \ref{cll3}, we have
\begin{align*}
&\sum_{k=1}^n\re\lj \(\re_{k-1}-\re_k\)\tilde t_n(z)\rtr\(\bigg(\bi_2+\frac1n\bq_k^*\bd_k^2(z)\bq_k\bigg)\zeta_k(z)\tilde\varepsilon_k(z)\)\rj^3\\
\le&\sum_{k=1}^n\re\lj \rtr\(\bigg(\bi_2+\frac1n\bq_k^*\bd_k^2(z)\bq_k\bigg)\zeta_k(z)\tilde\varepsilon_k(z)\)\rj^3\\
=&4\sum_{k=1}^n\re\lj \rtr\(\bigg(\bi_2+\frac1n\bq_k^*\bd_k^2(z)\bq_k\bigg)^3\zeta_k^3(z)\tilde\varepsilon_k^3(z)\)\rj\\
\le&32\sum_{k=1}^n\re\lj \rtr\(\bigg(\bi_2+\frac1n\bq_k^*\bd_k^2(z)\bq_k\bigg)^3\tilde\varepsilon_k^3(z)\)\rj+o(n)\\
=&\sum_{k=1}^n\re\lj \rtr\bigg(\bi_2+\frac1n\bq_k^*\bd_k^2(z)\bq_k\bigg)\rtr\tilde\varepsilon_k(z)\rj^3+o(n)\\
\le&8\sum_{k=1}^n\re^{1/2}\lj \rtr\bigg(\bi_2+\frac1n\bq_k^*\bd_k^2(z)\bq_k\bigg)\rj^6\re^{1/2}\lj\rtr\tilde\varepsilon_k(z)\rj^6+o(n)=o(n).
\end{align*}
where the last inequality is from (\ref{al9}) and Lemma \ref{cll4}. Therefore, from the above inequalities we conclude that
\begin{align}\label{al15}
&\re\lj m_n(z)-\re m_n(z)\rj^3= o(n^{-5/2})+o(n^{-3/2})=o(1)
\end{align}
which completes the proof that
\begin{align*}
\mathcal{I}_3=o(1)
\end{align*}
uniformly in $z\in\mathcal{C}_n$.

Secondly, we find the approximation of $\re\rtr\varepsilon_k(z)$. Recall that
\begin{align*}
\re\rtr\varepsilon_k(z)=&\re\rtr\(\frac1{\sqrt n}x_{kk}-\frac1n\bq_k^*\bd_k(z)\bq_k\)+2\re m_n(z)\\
=&\frac1n\re\rtr\(\bd(z)-\bd_k(z)\)=-\frac1n\re\rtr\(\bigg(\bi_2+\frac1n\bq_k^*\bd_k^2(z)\bq_k\bigg)\zeta_k(z)\)\\
=&-\frac1n\re\rtr\zeta_k(z)-\frac1{n^2}\re\rtr\bigg(\bq_k^*\bd_k^2(z)\bq_k\bigg)\zeta_k(z)\\
=&-\frac1n\re\rtr\zeta_k(z)-\frac1{2n^2}\re\bigg(\rtr\(\bq_k^*\bd_k^2(z)\bq_k\)-\rtr\bd_k^2(z)\bigg)\rtr\zeta_k(z)-\frac1{2n^2}\re\rtr\bd_k^2(z)\rtr\zeta_k(z).
\end{align*}
Using (\ref{al11}) and Lemma \ref{cll4}, it follows that
\begin{align*}
&\frac1{2n^2}\re\lj\bigg(\rtr\(\bq_k^*\bd_k^2(z)\bq_k\)-\rtr\bd_k^2\bigg)\rtr\zeta_k(z)\rj\\
\le&\frac1{n^2v}\re^{1/2}\lj\rtr\(\bq_k^*\bd_k^2(z)\bq_k\)-\rtr\bd_k^2\rj^2\le\frac C{n^2v}\sqrt{n}=\frac C{\sqrt n \varepsilon_n}=o(1).
\end{align*}
Let $F_{nk}$ denote the ESD of $\bw_{nk}$, then by the interlacing theorem, we have
\begin{align*}
\|F_n-F_{nk}\|\le\frac1n.
\end{align*}
Combining the above inequality and $F_n\xrightarrow{a.s.}F$, one finds
\begin{align}\label{al13}
\max_{k\le n}\sup_x|F_{nk}(x)-F(x)|\to0,\quad a.s.
\end{align}
From the above inequality, it yields that
\begin{align}\label{al22}
\sup_{z\in\mathcal{C}_n}\re\lj\frac1{2n}\rtr\bd_k^2-m'(z)\rj
=&\sup_{z\in\mathcal{C}_n}\re\lj\frac1{2n}\rtr\bd_k^2-m'(z)\rj I(B_n^c)
+\sup_{z\in\mathcal{C}_n}\re\lj\frac1{2n}\rtr\bd_k^2-m'(z)\rj I(B_n)\\
\le&\sup_{z\in\mathcal{C}_n}\re\lj\frac1{2n}\rtr\bd_k^2-m'(z)\rj I(B_n^c)+\sup_{z\in\mathcal{C}_n}\frac2{v^2}{\rm P}(B_n)\notag\\
\le&\sup_{z\in\mathcal{C}_n}\re\lj\frac{n-1}n\int\frac{dF_{nk}}{(x-z)^2}-\int\frac{dF}{(x-z)^2}\rj I(B_n^c)+\sup_{z\in\mathcal{C}_n}\frac2{v^2}{\rm P}(B_n)\notag\\
\le&C\sup_x|F_{nk}(x)-F(x)|+o(n^{-t})=o(1).\notag
\end{align}
Therefore, we obtain that
\begin{align}\label{al14}
\re\rtr\varepsilon_k(z)=&-\frac1n\re\rtr\zeta_k(z)-\frac{m'(z)}{n}\re\rtr\zeta_k(z)+o(n^{-1})=-\frac{1+m'(z)}n\re\rtr\zeta_k(z)+o(n^{-1}),
\end{align}
which implies
\begin{align*}
\sum_{k=1}^n\re\rtr\varepsilon_k(z)=&-\frac{1+m'(z)}n\sum_{k=1}^n\re\rtr\zeta_k(z)+o(1)=2\(1+m'(z)\)\re m_n(z)+o(1),
\end{align*}
where $o(1)$ is uniform for $z\in\mathcal{C}_n$.

By (\ref{al13}), one has
\begin{align*}
\sup_{z\in\mathcal{C}_n}\lj\re m_n(z)-m(z)\rj=o(1)
\end{align*}
which implies
\begin{align}\label{al18}
\sum_{k=1}^n\re\rtr\varepsilon_k(z)=2\(1+m'(z)\)m(z)+o(1).
\end{align}

Finally, we investigate the limit of $\re\varepsilon_k^2(z)$. It is obvious that
\begin{align*}
\sum_{k=1}^n\re\rtr\varepsilon_k^2(z)=\frac12\sum_{k=1}^n\re\left[\rtr\varepsilon_k(z)-\re\rtr\varepsilon_k(z)\right]^2+\frac12\sum_{k=1}^n[\re\rtr\varepsilon_k(z)]^2.
\end{align*}
For the second term of the righthand side of the above equality, we get by (\ref{al8}) and (\ref{al14})
\begin{align*}
\sum_{k=1}^n\left|\re\rtr\varepsilon_k(z)\rj^2\le n\(Cn^{-1}\)^2\le Cn^{-1}=o(1).
\end{align*}
Note that
\begin{align*}
\rtr\varepsilon_k(z)-\re\rtr\varepsilon_k(z)=&\frac1{\sqrt n}\rtr x_{kk}-\frac1n\(\rtr\(\bq_k^*\bd_k(z)\bq_k\)-\re\rtr\bd_k(z)\)\\
=&\frac1{\sqrt n}\rtr x_{kk}-\frac1n\(\rtr\(\bq_k^*\bd_k(z)\bq_k\)-\rtr\bd_k(z)\)-\frac1n\(\rtr\bd_k(z)-\re\rtr\bd_k(z)\).
\end{align*}
It follows
\begin{align*}
&\re\(\rtr\varepsilon_k(z)-\re\rtr\varepsilon_k(z)\)^2=\frac{4\sigma^2}{n}+\frac1{n^2}\re\(\rtr\(\bq_k^*\bd_k(z)\bq_k\)-\rtr\bd_k(z)\)^2\\
&\qquad\qquad+\frac1{n^2}\re\(\rtr\bd_k(z)-\re\rtr\bd_k(z)\)^2.
\end{align*}
Examining the proof of (\ref{al15}), one can similarly prove that
\begin{align*}
\re\lj\frac1n\(\rtr\bd_k(z)-\re\rtr\bd_k(z)\)\rj^2=o(n^{-1}).
\end{align*}
Employing Lemma \ref{cll6}, it follows that
\begin{align*}
\re\(\rtr\(\bq_k^*\bd_k(z)\bq_k\)-\rtr\bd_k(z)\)^2=&\(M-\frac{3}{2}\)\re\sum_{j}\(\rtr \mathbbm{d}_{j,j}\)^2+\re\rtr \bd_k^2(z)+o(n)
\end{align*}
where $\mathbbm{d}_{j,j}$ the $2\times2$ diagonal block matrices of the matrix $\bd_k(z)$. Let $\bq_{kj}$ denote the quaternion column of $\bq_k$ with $j$th quaternion elements removed. Let $\bw_{nkj}$ be the matrix obtained from $\bw_{nk}$ with the $j$th quaternions column and row removed. Moreover, write
\begin{align*}
& \bd_{kj}(z)=(\bw_{nkj}-z\bi_{2n-4})^{-1}, \ \zeta_{kj}(z)= \((z +  m(z)) *\bi_2-\vk\)^{-1}\\
&\varepsilon_{kj}(z) = n^{-1/2}x_{jj}-n^{-1}\bq_{kj}^*\bd_{kj}(z)\bq_{kj} + m(z)*\bi_2.
\end{align*}
Using Lemma \ref{cll2}, we have
\begin{align*}
&\mathbbm{d}_{j,j}=-\zeta_{kj}(z)=-\frac1{z+m(z)}\bi_2-\frac1{z+m(z)}\zeta_{kj}(z)\varepsilon_{kj}(z)\\
&\qquad\qquad=m(z)\bi_2+m(z)\zeta_{kj}(z)\varepsilon_{kj}(z).
\end{align*}
It can be verified that (\ref{al8}) holds for $\zeta_{kj}(z)$. Thus, combining with (\ref{al6}), Lemma \ref{cll4} and Theorem 1.1 in \cite{yin2014rate}, we get
\begin{align}\label{al23}
&\re\lj\rtr\mathbbm{d}_{j,j}-2m(z)\rj^2\le 4\re\lj\varepsilon_{kj}(z)\rj^2+o(1)\\
\le&\frac Cn\re\left\|x_{jj}\right\|_Q^2+\frac C{n^{2}}\re\lj\rtr\(\bq_{kj}^*\bd_{kj}(z)\bq_{kj}\)-\rtr\(\bd_{kj}(z)\)\rj^2\notag\\
&+\frac C{n^2}\re\lj\rtr\bd_{kj}(z)-\rtr\bd(z)\rj^2+C\re\lj m_n(z)-m(z)\rj^2+o(1)\notag\\
\le&2Cn^{-1}+Cn^{-2}+Cn^{-4/5}+o(1)=o(1).\notag
\end{align}
Therefore, one obtains that
\begin{align}\label{al19}
\sum_{k=1}^n\re\rtr\varepsilon_k^2(z)=2{\sigma^2}+\(2M-{3}\)m^2(z)+m'(z)+o(1).
\end{align}

Then from (\ref{al17}), (\ref{al18}), and (\ref{al19}), we get
\begin{align}\label{al21}
&n\delta_n(z)  =-{t_n^2(z)}\(1+m'(z)\)m(z)\\
&\qquad-{t_n^3(z)}\({\sigma^2}+{\(M-\frac{3}2\)}m^2(z)+\frac12m'(z)\)+o(1)\notag
\end{align}
which implies
\begin{align*}
\delta_n(z) =o(1).
\end{align*}
Note that
\begin{align*}
&\re m_n(z)=-\frac1{2n}\sum_{k=1}^n\re\rtr\zeta_k(z)\\
=&-\frac1{2n}\sum_{k=1}^n\re\rtr\(t_n(z)\bi_2+ t_n(z)\zeta_k(z)\varepsilon_k(z)\)
=-t_n(z)+\delta_n(z).
\end{align*}
By the above equality and (\ref{al16}), it yields
\begin{align}\label{al20}
t_n(z)=-m\(z+\delta_n(z)\)=-m(z)+o(1).
\end{align}
Substituting (\ref{al20}) into (\ref{al21}), we conclude
\begin{align*}
&n\delta_n(z)  ={m^3(z)}\({\sigma^2}-1-\frac12m'(z)+\(M-\frac{3}2\)m^2(z)\)+o(1)\notag.
\end{align*}
Hence,
\begin{align*}
\re M_n(z)=\(1+m'(z)\){m^3(z)}\({\sigma^2}-1-\frac12m'(z)+\(M-\frac{3}2\)m^2(z)\)+o(1).
\end{align*}

\subsection{Convergence of the process $M_n(z)-\re M_n(z)$}

In this section, we establish the convergence of the process $M_n(z)-\re M_n(z)$. For this aim, we proceed in our proof by taking several steps.

\subsubsection{Finite dimensional convergence of $M_n(z)-\re M_n(z)$}
It is obvious from Lemma \ref{cll3} that
\begin{align}\label{eq2}
&M_n(z)-\re M_n(z)\\
=&\frac12\sum_{k=1}^n\gamma_k=-\frac12\sum_{k=1}^n\(\re_{k-1}-\re_k\)\rtr\(\bigg(\bi_2+\frac1n\bq_k^*\bd_k^2(z)\bq_k\bigg)\zeta_k(z)\)\notag\\
=&-\frac14\sum_{k=1}^n\(\re_{k-1}-\re_k\)\rtr\bigg(\bi_2+\frac1n\bq_k^*\bd_k^2(z)\bq_k\bigg)\rtr\zeta_k(z)\notag\\
=&-\frac12\sum_{k=1}^n\(\re_{k-1}-\re_k\)b_k(z)\rtr\bigg(\bi_2+\frac1n\bq_k^*\bd_k^2(z)\bq_k\bigg)\notag\\
&-\frac18\sum_{k=1}^n\(\re_{k-1}-\re_k\)b_k(z) g_k(z)\rtr\bigg(\bi_2+\frac1n\bq_k^*\bd_k^2(z)\bq_k\bigg)\rtr\zeta_k(z)\notag\\
=&-\frac1{2n}\sum_{k=1}^n\re_{k-1}b_k(z)\(\rtr\bigg(\bq_k^*\bd_k^2(z)\bq_k\bigg)-\rtr\bd_k^2(z)\)\notag\\
&-\frac18\sum_{k=1}^n\(\re_{k-1}-\re_k\)b_k(z) g_k(z)\rtr\bigg(\bi_2+\frac1n\bq_k^*\bd_k^2(z)\bq_k\bigg)\rtr\zeta_k(z)\notag\\
\triangleq&-\frac1{2n}\sum_{k=1}^n\re_{k-1}b_k(z)\(\rtr\bigg(\bq_k^*\bd_k^2(z)\bq_k\bigg)-\rtr\bd_k^2(z)\)-\frac18\sum_{k=1}^n\(\re_{k-1}-\re_k\)a_k\notag.
\end{align}
where
\begin{align*}
b_k(z)&=\frac1{z+\frac1{2n}\rtr\bd_k(z)},\quad
g_k(z)=\frac1{\sqrt n}\rtr x_{kk}-\frac1n\rtr\(\bq_k^*\bd_k(z)\bd_k\)+\frac1{n}\rtr\bd_k(z).
\end{align*}
Notice that
\begin{align*}
a_k =&b_k(z) g_k(z)\rtr\bigg(\bi_2+\frac1n\bq_k^*\bd_k^2(z)\bq_k\bigg)\(2b_k+\frac12 b_k g_k\rtr\zeta_k(z)\)\\
=&2b_k^2(z) g_k(z)\rtr\bigg(\bi_2+\frac1n\bq_k^*\bd_k^2(z)\bq_k\bigg)+\frac12 b_k^2(z) g_k^2(z)\rtr\bigg(\bi_2+\frac1n\bq_k^*\bd_k^2(z)\bq_k\bigg)\rtr\zeta_k(z)\\
=&4b_k^2(z) g_k(z)\bigg(1+\frac1{2n}\rtr\bd_k^2(z)\bigg)+\frac2n b_k^2 (z)g_k(z)\rtr\(\bq_k^*\bd_k^2(z)\bq_k-\rtr\bd_k^2(z)\)\\
&+ b_k^2(z) g_k^2(z)\rtr\(\bigg(\bi_2+\frac1n\bq_k^*\bd_k^2(z)\bq_k\bigg)\zeta_k(z)\)\\
\triangleq&a_{k1}+a_{k2}+a_{k3}.
\end{align*}
Comparing with (\ref{al10}), it can be verified
\begin{align}\label{eq1}
\lj b_k(z)\rj<1.
\end{align}
Employing (\ref{al8}) and Lemma \ref{cll4}, we get
\begin{align*}
&\re\lj\sum_{k=1}^n\(\re_{k-1}-\re_k\)a_{k2}\rj^2=\sum_{k=1}^n\re\lj\(\re_{k-1}-\re_k\)a_{k2}\rj^2\\
\le&\frac4{n^2}\sum_{k=1}^n\re\lj g_k(z)\rtr\(\bq_k^*\bd_k^2(z)\bq_k-\rtr\bd_k^2(z)\)\rj^2\\
\le&\frac4{n^2}\sum_{k=1}^n\re^{1/2}\lj g_k(z)\rj^4\re^{1/2}\lj\rtr\(\bq_k^*\bd_k^2(z)\bq_k-\rtr\bd_k^2(z)\)\rj^4\\
\le&\frac C{n}\(\frac1n+\frac{\eta_n^2}{n^{1/2}}\)\(n+n^{3/2}\eta_n^2\)=o(1)
\end{align*}
and
\begin{align*}
&\re\lj\sum_{k=1}^n\(\re_{k-1}-\re_k\)a_{k3}\rj^2=\sum_{k=1}^n\re\lj\(\re_{k-1}-\re_k\)a_{k3}\rj^2\\
\le&\sum_{k=1}^n\re\lj g_k^2(z)\rtr\(\bigg(\bi_2+\frac1n\bq_k^*\bd_k^2(z)\bq_k\bigg)\zeta_k(z)\)\rj^2\\
\le&4\sum_{k=1}^n\re\lj g_k^2(z)\rtr\(\bigg(\bi_2+\frac1n\bq_k^*\bd_k^2(z)\bq_k\bigg)\)\rj^2\\
\le&C\sum_{k=1}^n\re\lj g_k^2(z)\bigg(2+\frac1n\rtr\bd_k^2(z)\bigg)\rj^2+\frac C{n^2}\sum_{k=1}^n\re\lj g_k^2(z)\(\rtr\bigg(\bq_k^*\bd_k^2(z)\bq_k\bigg)-\rtr\bd_k^2(z)\)\rj^2\\
\le&C\sum_{k=1}^n\re\lj 2+\frac1n\rtr\bd_k^2(z)\rj^2\(\frac1{n^2}\|\bd_k(z)\|^4+\frac{\eta_n^4}n\|\bd_k(z)\|^4\)+o(1)=o(1).
\end{align*}
where $o(1)$ is uniform in $z\in\mathcal{C}_n$. Hence, it follow that
\begin{align*}
M_n(z)-\re M_n(z)=&-\frac1{2n}\sum_{k=1}^n\re_{k-1}b_k(z)\(\rtr\bigg(\bq_k^*\bd_k^2(z)\bq_k\bigg)-\rtr\bd_k^2(z)\)\\
&-\frac12\sum_{k=1}^n\re_{k-1}b_k^2(z) g_k(z)\bigg(1+\frac1{2n}\rtr\bd_k^2(z)\bigg)+o_{L_2}(1)\\
\triangleq&\sum_{k=1}^n\re_{k-1}\psi_k(z)+o_{L_2}(1)=\sum_{k=1}^n\re_{k-1}\frac d{dz}\(\frac12b_k(z)g_k(z)\)+o_{L_2}(1)\\
\triangleq&\sum_{k=1}^n\re_{k-1}\frac d{dz}\phi_k(z)+o_{L_2}(1).
\end{align*}
Here, $o_{L_2}(1)$ is uniform for $z\in\mathcal{C}_n$ in the sense of $L_2$ convergence.

Now, we return assume $z\in\mathbb{C}_0$. Let $\{z_t,t=1,\cdots,m\}$ be $m$ different points belongs to $\mathbb{C}_0$. Then we only need to deducing the weak convergence of the vector martingale
\begin{align*}
\mb Z_n=\sum_{k=1}^n\re_{k-1}\(\psi_{k}(z_1),\cdots,\psi_k(z_m)\)=\sum_{k=1}^n\re_{k-1}\boldsymbol\Psi_k.
\end{align*}

Let
\begin{align*}
\Gamma_n(z_j,z_l)=\sum_{k=1}^n\re_k\left[\re_{k-1}\frac d{dz_j}\phi_k(z_j)\re_{k-1}\frac d{dz_l}\phi_k(z_l)\right].
\end{align*}
Using Lemma \ref{cll7}, it suffices to show that Lyapounov's condition holds and $\Gamma_n$ converges in probability.

At first, applying Lemma \ref{cll4}, we have
\begin{align*}
&\sum_{k=1}^n\re\|\re_{k-1}\boldsymbol\Psi_k\|^4=\sum_{k=1}^n\re\(\sum_{j=1}^m|\re_{k-1}\psi_k(z_j)|^2\)^2
\le m\sum_{k=1}^n\sum_{j=1}^m\re|\re_{k-1}\psi_k(z_j)|^4\\
\le&\frac C{n^4}\sum_{k=1}^n\sum_{j=1}^m\re\lj\rtr\bigg(\bq_k^*\bd_k^2(z)\bq_k\bigg)-\rtr\bd_k^2(z)\rj^4+C\sum_{k=1}^n\sum_{j=1}^m\re\lj\(1+\frac1{2n}\rtr\bd_k^{2}(z_j)\)g_k(z)\rj^4\\
\le&C\(n^{-1}+\eta_n^4\)+C\sum_{k=1}^n\sum_{j=1}^m\re\lj g_k(z)\rj^4\\
\le& o\(1\)+C\sum_{k=1}^n\sum_{j=1}^m\re\lj\frac1{\sqrt n}\rtr x_{kk}\rj^4+\frac C{n^4}\sum_{k=1}^n\sum_{j=1}^m\re\lj \rtr\(\bq_k^*\bd_k(z)\bd_k\)-\rtr\bd_k(z)\rj^4=o(1).
\end{align*}

We are in a position to derive the limit of $\Gamma_n$. For any $z_1,z_2\in\mathbb{C}_0$, employing Vitali's lemma, our goal transform into finding the limit of
\begin{align*}
\sum_{k=1}^n\re_k\left[\re_{k-1}\phi_k(z_1)\re_{k-1}\phi_k(z_2)\right]=\frac14\sum_{k=1}^n\re_k\left[\re_{k-1}b_k(z_1)g_k(z_1)\re_{k-1}b_k(z_2)g_k(z_2)\right].
\end{align*}
From (\ref{al22}) and Lemma \ref{cll6}, it follows
\begin{align*}
&\sum_{k=1}^n\re_k\left[\re_{k-1}\phi_k(z_1)\re_{k-1}\phi_k(z_2)\right]\\
=&\frac14m(z_1)m(z_2)\sum_{k=1}^n\re_k\left[\re_{k-1}g_k(z_1)\re_{k-1}g_k(z_2)\right]+o_{L_2}(1)\\
=&m(z_1)m(z_2)\sigma^2+\frac1{4n^2}m(z_1)m(z_2)\sum_{k=1}^n\re_k\bigg[\re_{k-1}\(\rtr\(\bq_k^*\bd_k(z_1)\bd_k\)-\rtr\bd_k(z_1)\)\\
&\quad\quad\times\re_{k-1}\(\rtr\(\bq_k^*\bd_k(z_2)\bd_k\)-\rtr\bd_k(z_2)\)\bigg]+o_{L_2}(1)\\
=&m(z_1)m(z_2)\sigma^2+\frac{2M-{3}}{8n^2}m(z_1)m(z_2)\sum_{k=1}^n\sum_{j>k}\rtr \mathbbm{d}_{jj1}\rtr \mathbbm{d}_{jj2}\\
&+\frac1{4n^2}m(z_1)m(z_2)\sum_{k=1}^n\sum_{j,l>k}\rtr \widetilde{\mathbbm{d}}_{jl1} \widetilde{\mathbbm{d}}_{lj2}+o_{L_2}(1)\\
=&m(z_1)m(z_2)\sigma^2+\frac{2M-{3}}{4}m^2(z_1)m^2(z_2)+\frac1{4n^2}m(z_1)m(z_2)\sum_{k=1}^n\sum_{j,l>k}\rtr \widetilde{\mathbbm{d}}_{jl1}\widetilde{\mathbbm{d}}_{lj2}+o_{L_2}(1)\\
=&m(z_1)m(z_2)\sigma^2+\frac{2M-{3}}{4}m^2(z_1)m^2(z_2)+\frac1{4n}m(z_1)m(z_2)\sum_{k=1}^nS_k+o_{L_2}(1).
\end{align*}
where $\widetilde{\mathbbm{d}}_{jjl}$ is the $2\times2$ diagonal block matrices of the matrix $\re_{k-1}\bd_k(z_l),l=1,2$ and the last second equality is from (\ref{al23}).

Now, we only need to find the limit of $S_1$. Let  $\be_j \ (j=1,\cdots,k-1,k+1,\cdots,n)$ be the $(2 n-2)\times2$ matrix whose $j$th (or $(j-1)$th) element is $\bi_2$ and others are ${\bf 0}$ if $j<k$ (or $j>k$ correspondingly). Recall that
\begin{align*}
\bw_{nk}=\frac1{\sqrt n}\sum_{j\neq l}\be_jx_{jl}\be_l'
\end{align*}
Multiplying both sides by $\bd_k(z)$, we get
\begin{align}\label{al24}
\bi_{2n-2}+z\bd_k(z)=\frac1{\sqrt n}\sum_{j\neq l}\be_jx_{jl}\be_l'\bd_k(z)
\end{align}
For $j,l\neq k$, define
\begin{align*}
&\bw_{kjl}=\bw_{nk}-\frac1{\sqrt n}\delta_{jl}\(\be_jx_{jl}\be_l'+\be_lx_{lj}\be_j'\),\ \bd_{kjl}(z)=\(\bw_{kjl}-z\bi_{2n-2}\)^{-1},
\end{align*}
and
\begin{align*}
\delta_{jl}=\begin{cases}
1,&\quad{\rm if} \ j\neq l,\\
1/2,&\quad{\rm if} \ j=l.
\end{cases}
\end{align*}
Using the identity
\begin{align*}
\ba^{-1}-\bb^{-1}=\bb^{-1}\(\bb-\ba\)\ba^{-1},
\end{align*}
it is obvious
\begin{align}\label{al25}
\bd_k(z)-\bd_{kjl}(z)=-\frac1{\sqrt n}\delta_{jl}\bd_{kjl}(z)\(\be_jx_{jl}\be_l'+\be_lx_{lj}\be_j'\)\bd_{k}(z).
\end{align}
By (\ref{al24}) and (\ref{al25}), we obtain
\begin{align*}
&z\re_{k-1}\bd_k(z)=-\bi_{2n-2}+\frac1{\sqrt n}\sum_{j,l>k}\be_jx_{jl}\be_l'\re_{k-1}\bd_{kjl}(z)\\
&\quad\quad-\frac1{ n}\sum_{j, l\neq k}\delta_{jl}\re_{k-1}\(\be_jx_{jl}\be_l'\bd_{kjl}(z)\(\be_jx_{jl}\be_l'+\be_lx_{lj}\be_j'\)\bd_{k}(z)\)\\
=&-\bi_{2n-2}+\frac1{\sqrt n}\sum_{j,l>k}\be_jx_{jl}\be_l'\re_{k-1}\bd_{kjl}(z)\\
&-\frac1{ 2n}\sum_{j, l\neq k}\delta_{jl}\re_{k-1}\(\|x_{jl}\|_Q^2\rtr\(\be_l'\bd_{kjl}(z)\be_l\)\be_j\be_j'\bd_{k}(z)\)\\
&-\frac1{ n}\sum_{j, l\neq k}\delta_{jl}\re_{k-1}\(\be_jx_{jl}\be_l'\bd_{kjl}(z)\be_jx_{jl}\be_l'\bd_{k}(z)\)\\
=&-\bi_{2n-2}+\frac1{\sqrt n}\sum_{j,l>k}\be_jx_{jl}\be_l'\re_{k-1}\bd_{kjl}(z)-\frac{\(n-3/2\)m(z)}{ n}\sum_{j\neq k}\be_j\be_j'\re_{k-1}\bd_{k}(z)\\
&-\frac{m(z)}{ n}\sum_{j, l\neq k}\delta_{jl}\re_{k-1}\(\(\|x_{jl}\|_Q^2-1\)\be_j\be_j'\bd_{k}(z)\)\\
&-\frac1{ 2n}\sum_{j, l\neq k}\delta_{jl}\re_{k-1}\(\|x_{jl}\|_Q^2\(\rtr\(\be_l'\bd_{kjl}(z)\be_l\)-2m(z)\)\be_j\be_j'\bd_{k}(z)\)\\
&-\frac1{ n}\sum_{j, l\neq k}\delta_{jl}\re_{k-1}\(\be_jx_{jl}\be_l'\bd_{kjl}(z)\be_jx_{jl}\be_l'\bd_{k}(z)\)\\
\triangleq&-\bi_{2n-2}+\ba_{k1}(z)+\ba_{k2}(z)+\ba_{k3}(z)+\ba_{k4}(z)+\ba_{k5}(z)
\end{align*}
which implies that
\begin{align*}
z_1S_k=&-\frac1{n}\sum_{i_1>k}\rtr\(\be_{i_1}'\re_{k-1}\bd_k(z_2)\be_{i_1}\)+\frac1{n}\sum_{i_1,i_2>k}\rtr\(\be_{i_1}'\ba_{k1}(z_1)\be_{i_2}\be_{i_2}'\re_{k-1}\bd_k(z_2)\be_{i_1}\)\\
&+\frac1{n}\sum_{i_1,i_2>k}\rtr\(\be_{i_1}'\ba_{k2}(z_1)\be_{i_2}\be_{i_2}'\re_{k-1}\bd_k(z_2)\be_{i_1}\)\\
&+\frac1{n}\sum_{i_1,i_2>k}\rtr\(\be_{i_1}'\ba_{k3}(z_1)\be_{i_2}\be_{i_2}'\re_{k-1}\bd_k(z_2)\be_{i_1}\)\\
&+\frac1{n}\sum_{i_1,i_2>k}\rtr\(\be_{i_1}'\ba_{k4}(z_1)\be_{i_2}\be_{i_2}'\re_{k-1}\bd_k(z_2)\be_{i_1}\)\\
&+\frac1{n}\sum_{i_1,i_2>k}\rtr\(\be_{i_1}'\ba_{k5}(z_1)\be_{i_2}\be_{i_2}'\re_{k-1}\bd_k(z_2)\be_{i_1}\).
\end{align*}

We assert that the last three terms of the above equality are negligible. It can be verified that
\begin{align}\label{al26}
\lj\sum_{i_2>k}\rtr\(\be_{i_1}'\bd_{k}(z_1)\be_{i_2}\be_{i_2}'\re_{k-1}\bd_k(z_2)\be_{i_1}\)\rj\le2v_0^{-2}.
\end{align}
Using (\ref{al26}) and Cauchy-Schwarz inequality, one finds
\begin{align*}
&\re\lj\sum_{i_2>k}\rtr\(\be_{i_1}'\ba_{k3}(z_1)\be_{i_2}\be_{i_2}'\re_{k-1}\bd_k(z_2)\be_{i_1}\)\rj\\
=&\frac{|m(z_1)|}{ n}\re\lj\re_{k-1}\(\sum_{l\neq k}\delta_{i_1l}\(\|x_{{i_1}l}\|_Q^2-1\)\sum_{i_2>k}\rtr\(\be_{i_1}'\bd_{k}(z_1)\be_{i_2}\be_{i_2}'\re_{k-1}\bd_k(z_2)\be_{i_1}\)\)\rj\\
\le&\frac{1}{ n}\re^{1/2}\lj\sum_{l\neq k}\delta_{i_1l}\(\|x_{{i_1}l}\|_Q^2-1\)\rj^2\re^{1/2}\lj\sum_{i_2>k}\rtr\(\be_{i_1}'\bd_{k}(z_1)\be_{i_2}\be_{i_2}'\re_{k-1}\bd_k(z_2)\be_{i_1}\)\rj^2\\
\le&\frac{2}{ nv_0}\(\sum_{l\neq k}\re\lj\|x_{{i_1}l}\|_Q^2-1\rj^2\)^{1/2}=O(n^{-1/2}).
\end{align*}
By definition, we have
\begin{align*}
&\re\lj\rtr\be_l'\(\bd_{k}(z_1)-\bd_{ki_1l}(z_1)\)\be_l\rj^2\\
=&\frac1{ n}\delta_{jl}\re\lj\rtr\be_l'\bd_{ki_1l}(z_1)\(\be_jx_{jl}\be_l'+\be_lx_{lj}\be_j'\)\bd_{k}(z_1)\be_l\rj^2\\
\le&\frac2{ n}\re\left\|\be_l'\bd_{ki_1l}(z_1)\(\be_jx_{jl}\be_l'+\be_lx_{lj}\be_j'\)\bd_{k}(z_1)\be_l\right\|^2\\
\le&2{\eta_n^2}\re\left\|\bd_{ki_1l}(z_1)\bd_{k}(z_1)\right\|^2\le2{\eta_n^2}v_0^{-4}=o(1).
\end{align*}
From(\ref{al23}) and the above inequality, it follow that
\begin{align}\label{al27}
&\re\lj\rtr\(\bd_{ki_1l}(z_1)\be_l\)-2m(z)\rj^2=o(1).
\end{align}
Then, we get by (\ref{al26}) and (\ref{al27})
\begin{align*}
&\re\lj\sum_{i_2>k}\rtr\(\be_{i_1}'\ba_{k4}(z_1)\be_{i_2}\be_{i_2}'\re_{k-1}\bd_k(z_2)\be_{i_1}\)\rj\\
=&\frac1{ 2n}\re\lj\re_{k-1}\(\sum_{ l\neq k}\delta_{i_1l}\|x_{i_1l}\|_Q^2\(\rtr\(\be_l'\bd_{ki_1l}(z_1)\be_l\)-2m(z_1)\)\sum_{i_2>k}\rtr\(\be_{i_1}'\bd_{k}(z_1)\be_{i_2}\be_{i_2}'\re_{k-1}\bd_k(z_2)\be_{i_1}\)\)\rj
\\
\le&\frac1{ nv_0^2}\re\lj\sum_{ l\neq k}\delta_{i_1l}\|x_{i_1l}\|_Q^2\(\rtr\(\be_l'\bd_{ki_1l}(z_1)\be_l\)-2m(z_1)\)\rj\\
\le&\frac1{ nv_0^2}\sum_{ l\neq k}\re\lj\rtr\(\be_l'\bd_{ki_1l}(z_1)\be_l\)-2m(z_1)\rj=o(1).
\end{align*}
Furthermore, we find
\begin{align*}
&\frac1n\re\lj\sum_{i_1,i_2>k}\rtr\(\be_{i_1}'\ba_{k5}(z_1)\be_{i_2}\be_{i_2}'\re_{k-1}\bd_k(z_2)\be_{i_1}\)\rj\\
=&\frac{\delta_{i_1l}}{ n^2}\re\lj\sum_{i_1>k}\sum_{l\neq k}\re_{k-1}\rtr\(x_{i_1l}\be_l'\bd_{ki_1l}(z_1)\be_{i_1}x_{i_1l}\sum_{i_2>k}\be_l'\bd_{k}(z_1)\be_{i_2}\be_{i_2}'\re_{k-1}\bd_k(z_2)\be_{i_1}\)\rj\\
\le&\frac{C}{ n^2v_0}\re\(\sum_{i_1>k}\sum_{l\neq k}\left\|x_{i_1l}\right\|_Q^2\left\|\sum_{i_2>k}\be_l'\bd_{k}(z_1)\be_{i_2}\be_{i_2}'\re_{k-1}\bd_k(z_2)\be_{i_1}\right\|\)\\
\le&\frac{C}{ n^2v_0}\(\sum_{i_1>k}\sum_{l\neq k}\re\left\|x_{i_1l}\right\|_Q^4\)^{1/2}
\(\sum_{i_1>k}\sum_{l\neq k}\re\left\|\sum_{i_2>k}\be_l'\bd_{k}(z_1)\be_{i_2}\be_{i_2}'\re_{k-1}\bd_k(z_2)\be_{i_1}\right\|^2\)^{1/2}\\
\le&\frac{C}{ n}
\(\sum_{i_1>k}\sum_{l\neq k}\re\left\|\sum_{i_2>k}\be_l'\bd_{k}(z_1)\be_{i_2}\be_{i_2}'\re_{k-1}\bd_k(z_2)\be_{i_1}\right\|^2\)^{1/2}= O(n^{-1/2}).
\end{align*}
Hence,
\begin{align*}
z_1S_k=&-\frac1{n}\sum_{i_1>k}\rtr\(\be_{i_1}'\re_{k-1}\bd_k(z_2)\be_{i_1}\)+\frac1{n}\sum_{i_1,i_2>k}\rtr\(\be_{i_1}'\ba_{k1}(z_1)\be_{i_2}\be_{i_2}'\re_{k-1}\bd_k(z_2)\be_{i_1}\)\\
&+\frac1{n}\sum_{i_1,i_2>k}\rtr\(\be_{i_1}'\ba_{k2}(z_1)\be_{i_2}\be_{i_2}'\re_{k-1}\bd_k(z_2)\be_{i_1}\)+o_{L_2}(1)\\
=&-2{m(z_2)}\(1-\frac kn\)+\frac1{n}\sum_{i_1,i_2>k}\rtr\(\be_{i_1}'\ba_{k1}(z_1)\be_{i_2}\be_{i_2}'\re_{k-1}\bd_k(z_2)\be_{i_1}\)\\
&-\frac{m(z_1)}{n}\sum_{i_1,i_2>k}\rtr\(\be_{i_1}'\re_{k-1}\bd_{k}(z_1)\be_{i_2}\be_{i_2}'\re_{k-1}\bd_k(z_2)\be_{i_1}\)+o_{L_2}(1)
\end{align*}
where the last inequality is from (\ref{al23}).

Now, let us evaluate the contributive components in the expression of $S_1$. By(\ref{al25}),  we have
\begin{align*}
&\sum_{i_2>k}\rtr\(\be_{i_1}'\ba_{k1}(z_1)\be_{i_2}\be_{i_2}'\re_{k-1}\bd_k(z_2)\be_{i_1}\)\\
=&\frac1{\sqrt n}\sum_{i_2>k}\sum_{l>k}\rtr\(x_{i_1l}\be_l'\re_{k-1}\bd_{ki_1l}(z_1)\be_{i_2}\be_{i_2}'\re_{k-1}\bd_k(z_2)\be_{i_1}\)\\
=&-\frac{m(z_2)}{ n}\sum_{l ,i_2>k}\delta_{i_1l}\rtr\(\be_l'\re_{k-1}\bd_{ki_1l}(z_1)\be_{i_2}\be_{i_2}'\re_{k-1}\bd_{ki_1l}(z_2)
\be_l\)\\
&-\frac{m(z_2)}{ n}\sum_{l ,i_2>k}\delta_{i_1l}\(\|x_{i_1l}\|_Q^2-1\)\rtr\(\be_l'\re_{k-1}\bd_{ki_1l}(z_1)\be_{i_2}\be_{i_2}'\re_{k-1}\bd_{ki_1l}(z_2)
\be_l\)\\
&-\frac1{ 2n}\sum_{l ,i_2>k}\delta_{i_1l}\|x_{i_1l}\|_Q^2\(\rtr\(\be_{i_1}'\bd_{k}(z_2)\be_{i_1}\)-2m(z_2)\)\rtr\(\be_l'\re_{k-1}\bd_{ki_1l}(z_1)\be_{i_2}\be_{i_2}'\re_{k-1}\bd_{ki_1l}(z_2)
\be_l\)\\
&-\frac1{ n}\sum_{i_2>k}\sum_{l>k}\delta_{i_1l}\rtr\(x_{i_1l}\be_l'\re_{k-1}\bd_{ki_1l}(z_1)\be_{i_2}\be_{i_2}'\re_{k-1}\bigg(\bd_{ki_1l}(z_2)
\be_{i_1}x_{i_1l}\be_l'\bd_{k}(z_2)\bigg)\be_{i_1}\)\\
&+\frac1{\sqrt n}\sum_{i_2>k}\sum_{l>k}\rtr\(x_{i_1l}\be_l'\re_{k-1}\bd_{ki_1l}(z_1)\be_{i_2}\be_{i_2}'\re_{k-1}\bd_{ki_1l}(z_2)\be_{i_1}\)\\
\triangleq&-\frac{m(z_2)}{ n}\sum_{l ,i_2>k}\delta_{i_1l}\rtr\(\be_l'\re_{k-1}\bd_{ki_1l}(z_1)\be_{i_2}\be_{i_2}'\re_{k-1}\bd_{ki_1l}(z_2)
\be_l\)+a_{ki_11}+a_{ki_12}+a_{ki_13}+a_{ki_14}.
\end{align*}
It is obvious from (\ref{al23}) and (\ref{al25}) that
\begin{align*}
\re|a_{ki_11}|^2=&\frac{m^2(z_2)}{ n^2}\re\lj\sum_{l ,i_2>k}\delta_{i_1l}\(\|x_{i_1l}\|_Q^2-1\)\rtr\(\be_l'\re_{k-1}\bd_{ki_1l}(z_1)\be_{i_2}\be_{i_2}'\re_{k-1}\bd_{ki_1l}(z_2)
\be_l\)\rj^2\\
=&\frac{m^2(z_2)}{ n^2}\sum_{l>k}\re\(\delta_{i_1l}^2\(\|x_{i_1l}\|_Q^2-1\)^2\lj\rtr\(\sum_{i_2>k}\be_l'\re_{k-1}\bd_{ki_1l}(z_1)\be_{i_2}\be_{i_2}'\re_{k-1}\bd_{ki_1l}(z_2)
\be_l\)^2\rj\)\\
\le&\frac{Cm^2(z_2)}{ n^2}\sum_{l>k}\re\(\|x_{i_1l}\|_Q^2-1\)^2=O(n^{-1})
\end{align*}
and
\begin{align*}
\re|a_{ki_12}|\le&\frac C{ n}\sum_{l >k}\re\lj\|x_{i_1l}\|_Q^2\(\rtr\(\be_{i_1}'\bd_{k}(z_2)\be_{i_1}\)-2m(z_2)\)\rj\\
\le&\frac C{ n}\sum_{l >k}\re^{1/2}\|x_{i_1l}\|_Q^4\re^{1/2}\lj\rtr\(\be_{i_1}'\bd_{k}(z_2)\be_{i_1}\)-2m(z_2)\rj^2=o(1).
\end{align*}
Using (\ref{al25}) and (\ref{al26}), we get
\begin{align*}
\re|a_{ki_13}|
\le&\frac C{ n}\(\sum_{l>k}\re\left\|x_{i_1l}\right\|_Q^4\left\|\sum_{i_2>k}\be_l'\re_{k-1}\bd_{ki_1l}(z_1)\be_{i_2}\be_{i_2}'\bd_{ki_1l}(z_2)
\be_{i_1}\right\|^2\)^{1/2}\\
&\quad\quad\times\(\sum_{l>k}\re\left\|\be_l'\bd_{k}(z_2)\be_{i_1}\right\|^2\)^{1/2}\\
\le&\frac C{ n}\(\sum_{l>k}\re\left\|\sum_{i_2>k}\be_l'\re_{k-1}\bd_{ki_1l}(z_1)\be_{i_2}\be_{i_2}'\bd_{ki_1l}(z_2)
\be_{i_1}\right\|^2\)^{1/2}\\
\le&\frac C{ n}\(\sum_{l>k}\re\left\|\sum_{i_2>k}\be_l'\re_{k-1}\bd_{k}(z_1)\be_{i_2}\be_{i_2}'\bd_{k}(z_2)
\be_{i_1}\right\|^2\)^{1/2}+o(1)=o(1).
\end{align*}
and
\begin{align*}
\re|a_{ki_14}|^2\le&\frac C{ n}\sum_{l>k}\re\|x_{i_1l}\|_Q^2\left\|\sum_{i_2>k}\be_l'\re_{k-1}\bd_{ki_1l}(z_1)\be_{i_2}\be_{i_2}'\re_{k-1}\bd_{ki_1l}(z_2)\be_{i_1}\right\|^2\\
&+\frac C{ n}\Bigg|\sum_{l_1\neq l_2>k}\re x_{i_1l_1}x_{l_2 i_1}\rtr\(\sum_{i_2>k}\be_{l_1}'\re_{k-1}\bd_{ki_1l_1}(z_1)\be_{i_2}\be_{i_2}'\re_{k-1}\bd_{ki_1l_1}(z_2)\be_{i_1}\)\\
&\quad\quad\times\rtr\(\sum_{i_2>k}\be_{l_2}'\re_{k-1}\bd_{ki_1l_2}(\bar z_1)\be_{i_2}\be_{i_2}'\re_{k-1}\bd_{ki_1l_2}(\bar z_2)\be_{i_1}\)\Bigg|\\
\le&\frac C{ n}\sum_{l>k}\re\left\|\sum_{i_2>k}\be_l'\re_{k-1}\bd_{k}(z_1)\be_{i_2}\be_{i_2}'\re_{k-1}\bd_{k}(z_2)\be_{i_1}\right\|^2\\
&+\frac C{ n^{3/2}}\Bigg[\sum_{l_1\neq l_2>k}\re \|x_{i_1l_1}\|_Q\|x_{l_2 i_1}\|_Q^2\left\|\sum_{i_2>k}\be_{l_1}'\re_{k-1}\bd_{ki_1l_1}(z_1)\be_{i_2}\be_{i_2}'\re_{k-1}\bd_{ki_1l_1}(z_2)\be_{i_1}\right\|\\
&\quad\quad\times\Big(\left\|\sum_{i_2>k}\be_{l_2}'\re_{k-1}\bd_{ki_1l_2}(\bar z_1)\be_{i_1}\be_{l_1}'\bd_{ki_1l_2}^{i_1l_1}(\bar z_1)\be_{i_2}\be_{i_2}'\re_{k-1}\bd_{ki_1l_2}(\bar z_2)\be_{i_1}\right\|\\
&\quad\quad+\left\|\sum_{i_2>k}\be_{l_2}'\re_{k-1}\bd_{ki_1l_2}(\bar z_1)\be_{l_1}\be_{i_1}'\bd_{ki_1l_2}^{i_1l_1}(\bar z_1)\be_{i_2}\be_{i_2}'\re_{k-1}\bd_{ki_1l_2}(\bar z_2)\be_{i_1}\right\|\Big)\Bigg]\\
\le&\frac C{ n}+\frac C{ n^{3/2}}\(\sum_{l_1>k}\re \|x_{i_1l_1}\|_Q^2\)^{1/2}\(\sum_{l_2>k}\re\|x_{l_2 i_1}\|_Q^4\)^{1/2}\le Cn^{-1/2}
\end{align*}
where $\bw_{ki_1l_2}^{i_1l_1}=\bw_{ki_1l_2}-\frac1{\sqrt n}\delta_{i_1l_1}\(x_{i_1l_1}\be_{i_1}\be_{l_1}'+x_{l_1i_1}\be_{l_1}\be_{i_1}'\)$ and $\bd_{ki_1l_2}^{i_1l_1}(z)=\(\bw_{ki_1l_2}^{i_1l_1}-z\bi_{2n-2}\)^{-1}$.

Hence, we find that
\begin{align*}
&\sum_{i_2>k}\rtr\(\be_{i_1}'\ba_{k1}(z_1)\be_{i_2}\be_{i_2}'\re_{k-1}\bd_k(z_2)\be_{i_1}\)\\
=&-\frac{m(z_2)}{ n}\sum_{l ,i_2>k}\delta_{i_1l}\rtr\(\be_l'\re_{k-1}\bd_{ki_1l}(z_1)\be_{i_2}\be_{i_2}'\re_{k-1}\bd_{ki_1l}(z_2)
\be_l\)+o_{L_1}(1)\\
=&-\frac{m(z_2)}{ n}\sum_{l ,i_2>k}\delta_{i_1l}\rtr\(\be_l'\re_{k-1}\bd_{k}(z_1)\be_{i_2}\be_{i_2}'\re_{k-1}\bd_{k}(z_2)
\be_l\)+o_{L_1}(1).
\end{align*}
Therefore, we obtain
\begin{align*}
z_1S_k
=&-2{m(z_2)}\(1-\frac kn\)-\frac{m(z_2)}{n}\(1-\frac kn\)\sum_{i_2 , l>k}\rtr\(\be_l'\re_{k-1}\bd_{k}(z_1)\be_{i_2}\be_{i_2}'\re_{k-1}\bd_{k}(z_2)
\be_l\)\\
&-\frac{m(z_1)}{n}\sum_{i_1,i_2>k}\rtr\(\be_{i_1}'\re_{k-1}\bd_{k}(z_1)\be_{i_2}\be_{i_2}'\re_{k-1}\bd_k(z_2)\be_{i_1}\)+o_{L_1}(1)
\end{align*}
which implies that
\begin{align*}
{z_1} S_k
=&-2{m(z_2)}\(1-\frac kn\)-{m(z_2)}\(1-\frac kn\)S_k-{m(z_1)}S_k+o_{L_1}(1).
\end{align*}
Recalling that $z+m(z)=-1/m(z)$, it follows that
\begin{align*}
S_k
=&\frac{2\(1-k/n\)m(z_1)m(z_2)}{1-\(1-k/n\){m(z_1)m(z_2)}}+o_{L_1}(1).
\end{align*}
This yields
\begin{align*}
\lim_{n\to\infty}\frac1n\sum_{k=1}^n S_k=&\lim_{n\to\infty}\frac1n\sum_{k=1}^n \frac{2\(1-k/n\)m(z_1)m(z_2)}{1-\(1-k/n\){m(z_1)m(z_2)}}\\
=&\int_0^1\frac{2tm(z_1)m(z_2)}{1-t{m(z_1)m(z_2)}}dt\\
=&-2\(1-\frac1{m(z_1)m(z_2)}\log\(1-{m(z_1)m(z_2)}\)\).
\end{align*}
Finally, $\sum_{k=1}^n\re_k\left[\re_{k-1}\phi_k(z_1)\re_{k-1}\phi_k(z_2)\right]$ converges in probability to
\begin{align*}
m(z_1)m(z_2)\(\sigma^2-1/2\)+\frac{2M-{3}}{4}m^2(z_1)m^2(z_2)+\frac1{2}\log\(1-{m(z_1)m(z_2)}\).
\end{align*}
Denoting $\Gamma_n(z_j,z_l)\xrightarrow{i.p.}\Gamma(z_j,z_l)$, we conclude that
\begin{align*}
\Gamma(z_1,z_2)=m'(z_1)m'(z_2)\(\sigma^2-1/2+\({2M-{3}}\)m(z_1)m(z_2)+\frac1{2\(1-{m(z_1)m(z_2)}\)^2}\).
\end{align*}

\subsubsection{The Proof of $(\ref{al1})$ for $j=l,r,0$}\label{jl0}
For any $z\in\mathcal{C}_0$, we get
\begin{align*}
\lj m_n(z)I\(B_n^c\)\rj\le2/(a-2)\quad{\rm and}\quad\lj m(z)\rj\le1/(a-2).
\end{align*}
Thus, it follows that
\begin{align*}
&\lim_{v_0\downarrow0}\limsup_{n\to\infty}\re\lj\int_{\mathcal{C}_0}M_n(z)I\(B_n^c\)dz\rj\\
\le&\lim_{v_0\downarrow0}\limsup_{n\to\infty}\int_{\mathcal{C}_0}\re\lj M_n(z)I\(B_n^c\)\rj dz\\
\le&\limsup_{n\to\infty}Cn/(a-2)\varepsilon_nn^{-1}=\limsup_{n\to\infty}C\varepsilon_n/(a-2)=0.
\end{align*}

Note that
\begin{align*}
&\lim_{v_0\downarrow0}\limsup_{n\to\infty}\re\lj\int_{\mathcal{C}_j}M_n(z)I\(B_n^c\)dz\rj^2\\
\le&\lim_{v_0\downarrow0}\limsup_{n\to\infty}\int_{\mathcal{C}_j}\re\lj M_n(z)-\re M_n(z)\rj^2 dz+\lim_{v_0\downarrow0}\limsup_{n\to\infty}\int_{\mathcal{C}_j}\lj \re M_n(z)-\re M(z)\rj^2 dz\\
&+\lim_{v_0\downarrow0}\limsup_{n\to\infty}\int_{\mathcal{C}_j}\lj \re M(z)\rj^2 dz\\
\triangleq&\mathcal{J}_1+\mathcal{J}_2+\mathcal{J}_3.
\end{align*}
From the fact $\limsup_{z\in\mathcal{C}_n}\lj \re M_n(z)-\re M(z)\rj\to 0$, we get
\begin{align*}
\mathcal{J}_2\to0.
\end{align*}
Considering $\re M(z)$ is continuous, it follows that
\begin{align*}
\mathcal{J}_3\le C\lim_{v_0\downarrow0}\int_{\mathcal{C}_j}dz\to0.
\end{align*}
Recalling that
\begin{align*}
M_n(z)-\re M_n(z)=&-\frac1{2n}\sum_{k=1}^n\re_{k-1}b_k(z)\(\rtr\bigg(\bq_k^*\bd_k^2(z)\bq_k\bigg)-\rtr\bd_k^2(z)\)\\
&-\frac18\sum_{k=1}^n\(\re_{k-1}-\re_k\)\(a_{k1}+a_{k2}+a_{k3}\),
\end{align*}
one has by (\ref{eq1}) and Lemma \ref{cll4}
\begin{align*}
&\limsup_{z\in\mathcal{C}_n}\re\lj M_n(z)-\re M_n(z)\rj^2\le\frac C{n^2}\limsup_{z\in\mathcal{C}_n}\sum_{k=1}^n\re\lj b_k(z)\(\rtr\bigg(\bq_k^*\bd_k^2(z)\bq_k\bigg)-\rtr\bd_k^2(z)\)\rj^2\\
&\quad\quad\quad\quad\quad\quad\quad+C\limsup_{z\in\mathcal{C}_n}\sum_{k=1}^n\re\(\lj a_{k1}\rj^2+\lj a_{k2}\rj^2+\lj a_{k3}\rj^2\)\\
&\quad\quad\le\frac C{n^2}\limsup_{z\in\mathcal{C}_n}\sum_{k=1}^n\re\left\|\bd_k(z)\right\|^4+C\limsup_{z\in\mathcal{C}_n}\sum_{k=1}^n\re\(\lj a_{k1}\rj^2+\lj a_{k2}\rj^2+\lj a_{k3}\rj^2\)\\
&\quad\quad\le\frac C{n}+C\limsup_{z\in\mathcal{C}_n}\sum_{k=1}^n\re\(\lj a_{k1}\rj^2+\lj a_{k2}\rj^2+\lj a_{k3}\rj^2\)\le C.
\end{align*}
Therefore, we obtain
\begin{align*}
&\lim_{v_0\downarrow0}\limsup_{n\to\infty}\re\lj\int_{\mathcal{C}_j}M_n(z)I\(B_n^c\)dz\rj^2
\le C\lim_{v_0\downarrow0}\int_{\mathcal{C}_j} dz+o(1)\to 0.
\end{align*}

\subsubsection{Tightness of the Process $M_n(z)-\re M_n(z)$}

We proceed to prove tightness of the sequence of random functions $M_n(z)-\re M_n(z)$. Using Theorem 12.3 of Billingsley \cite{billingsley2009convergence}, it suffices to show that for
$z_1,z_2\in\mathbb{C}_0$
\begin{align}\label{eq4}
\frac{\re\lj \bigg(M_n(z_1)-M_n(z_2)\bigg)-\bigg(\re M_n(z_1)-\re M_n(z_2)\bigg)\rj^2}{\lj z_1-z_2\rj^2}
\end{align}
is finite. Let $b_k(z)=\frac1{z+\frac1{2n}\rtr\bd_k(z)}$, then

 By (\ref{eq2}) and
\begin{align*}
\rtr\zeta_k(z)=2b_k(z)+\frac12b_k(z)g_k(z)\rtr\zeta_k(z),
\end{align*} it yields
\begin{align*}
&\re\lj \bigg(M_n(z_1)-M_n(z_2)\bigg)-\bigg(\re M_n(z_1)-\re M_n(z_2)\bigg)\rj^2=\frac14\sum_{k=1}^n\re\lj\gamma_k(z_1)-\gamma_k(z_2)\rj^2
\end{align*}
and
\begin{align*}
&\gamma_k(z_1)-\gamma_k(z_2)=-\frac12\(\re_{k-1}-\re_k\)\rtr\bigg(\bi_2+\frac1n\bq_k^*\bd_k^2(z_1)\bq_k\bigg)\rtr\zeta_k(z_1)-\gamma_k(z_2)\\
=&-\frac12\(\re_{k-1}-\re_k\)\rtr\bigg(\bi_2+\frac1n\bq_k^*\bd_k^2(z_2)\bq_k\bigg)\rtr\zeta_k(z_1)-\gamma_k(z_2)\\
&-\frac1{2n}\(\re_{k-1}-\re_k\)\left[\rtr\bq_k^*\bigg(\bd_k^2(z_1)-\bd_k^2(z_2)\bigg)\bq_k-\rtr\(\bd_k^2(z_1)-\bd_k^2(z_2)\)\right]\rtr\zeta_k(z_1)\\
&-\frac1{2n}\(\re_{k-1}-\re_k\)\rtr\bigg(\bd_k^2(z_1)-\bd_k^2(z_2)\bigg)\rtr\zeta_k(z_1)\\
=&-4\(\re_{k-1}-\re_k\)\left[g_k(z_1)-g_k(z_2)\right]\rtr\zeta_k(z_1)\rtr\bigg(\bi_2+\frac1n\bq_k^*\bd_k^2(z_2)\bq_k\bigg)\zeta_k(z_2)\\
&+8\left[z_1-z_2\right]\(\re_{k-1}-\re_k\)\rtr\zeta_k(z_1)\rtr\bigg(\bi_2+\frac1n\bq_k^*\bd_k^2(z_2)\bq_k\bigg)\zeta_k(z_2)\\
&+\frac4n\(\re_{k-1}-\re_k\)\rtr\zeta_k(z_1)\rtr\left[\bd_k(z_1)-\bd_k(z_2)\right]\rtr\bigg(\bi_2+\frac1n\bq_k^*\bd_k^2(z_2)\bq_k\bigg)\zeta_k(z_2)\\
&-\frac1{2n}\(\re_{k-1}-\re_k\)\left[\rtr\bq_k^*\bigg(\bd_k^2(z_1)-\bd_k^2(z_2)\bigg)\bq_k-\rtr\(\bd_k^2(z_1)-\bd_k^2(z_2)\)\right]\rtr\zeta_k(z_1)\\
&-\frac1{4n}\(\re_{k-1}-\re_k\)b_k(z_1)g_k(z_1)\rtr\bigg(\bd_k^2(z_1)-\bd_k^2(z_2)\bigg)\rtr\zeta_k(z_1)\\
=&-4\(\re_{k-1}-\re_k\)\left[g_k(z_1)-g_k(z_2)\right]\rtr\zeta_k(z_1)\rtr\bigg(\bi_2+\frac1n\bq_k^*\bd_k^2(z_2)\bq_k\bigg)\zeta_k(z_2)\\
&+8\left[z_1-z_2\right]\(\re_{k-1}-\re_k\)\rtr\zeta_k(z_1)\rtr\bigg(\bi_2+\frac1n\bq_k^*\bd_k^2(z_2)\bq_k\bigg)\zeta_k(z_2)\\
&+\frac8n\(\re_{k-1}-\re_k\)b_k(z_1)\rtr\left[\bd_k(z_1)-\bd_k(z_2)\right]\rtr\bigg(\bi_2+\frac1n\bq_k^*\bd_k^2(z_2)\bq_k\bigg)\zeta_k(z_2)\\
&+\frac2n\(\re_{k-1}-\re_k\)b_k(z_1)g_k(z_1)\rtr\zeta_k(z_1)\rtr\left[\bd_k(z_1)-\bd_k(z_2)\right]\rtr\bigg(\bi_2+\frac1n\bq_k^*\bd_k^2(z_2)\bq_k\bigg)\zeta_k(z_2)\\
&-\frac1{2n}\(\re_{k-1}-\re_k\)\left[\rtr\bq_k^*\bigg(\bd_k^2(z_1)-\bd_k^2(z_2)\bigg)\bq_k-\rtr\(\bd_k^2(z_1)-\bd_k^2(z_2)\)\right]\rtr\zeta_k(z_1)\\
&-\frac1{4n}\(\re_{k-1}-\re_k\)b_k(z_1)g_k(z_1)\rtr\bigg(\bd_k^2(z_1)-\bd_k^2(z_2)\bigg)\rtr\zeta_k(z_1)\\
=&-4\(\re_{k-1}-\re_k\)\left[g_k(z_1)-g_k(z_2)\right]\rtr\zeta_k(z_1)\rtr\bigg(\bi_2+\frac1n\bq_k^*\bd_k^2(z_2)\bq_k\bigg)\zeta_k(z_2)\\
&+8\left[z_1-z_2\right]\(\re_{k-1}-\re_k\)\rtr\zeta_k(z_1)\rtr\bigg(\bi_2+\frac1n\bq_k^*\bd_k^2(z_2)\bq_k\bigg)\zeta_k(z_2)\\
&+\frac8{n^2}\(\re_{k-1}-\re_k\)b_k(z_1)b_k(z_2)\rtr\left[\bd_k(z_1)-\bd_k(z_2)\right]\bigg(\rtr\bq_k^*\bd_k^2(z_2)\bq_k-\rtr\bd_k^2(z_2)\bigg)\\
&+\frac2n\(\re_{k-1}-\re_k\)b_k(z_1)b_k(z_2)g_k(z_2)\rtr\left[\bd_k(z_1)-\bd_k(z_2)\right]\rtr\bigg(\bi_2+\frac1n\bq_k^*\bd_k^2(z_2)\bq_k\bigg)\rtr\zeta_k(z_2)\\
&+\frac2n\(\re_{k-1}-\re_k\)b_k(z_1)g_k(z_1)\rtr\zeta_k(z_1)\rtr\left[\bd_k(z_1)-\bd_k(z_2)\right]\rtr\bigg(\bi_2+\frac1n\bq_k^*\bd_k^2(z_2)\bq_k\bigg)\zeta_k(z_2)\\
&-\frac1{2n}\(\re_{k-1}-\re_k\)\left[\rtr\bq_k^*\bigg(\bd_k^2(z_1)-\bd_k^2(z_2)\bigg)\bq_k-\rtr\(\bd_k^2(z_1)-\bd_k^2(z_2)\)\right]\rtr\zeta_k(z_1)\\
&-\frac1{4n}\(\re_{k-1}-\re_k\)b_k(z_1)g_k(z_1)\rtr\bigg(\bd_k^2(z_1)-\bd_k^2(z_2)\bigg)\rtr\zeta_k(z_1)\\
\triangleq&d_{k1}+d_{k2}+d_{k3}+d_{k4}+d_{k5}+d_{k6}+d_{k7}.
\end{align*}
It is obvious by Lemma \ref{cll3} that
\begin{align}\label{eq3}
\lj\rtr\bigg(\bi_2+\frac1n\bq_k^*\bd_k^2(z_2)\bq_k\bigg)\zeta_k(z_2)\rj=2\lj\frac{\rtr\bigg(\bi_2+\frac1n\bq_k^*\bd_k^2(z_2)\bq_k\bigg)}
{\Im\(2z+n^{-1}\rtr\(\bq_k^*\bd_k(z)\bq_k\)\)}\rj\le\frac2{v_0}.
\end{align}
Using (\ref{al11}), (\ref{eq1}), and (\ref{eq3}), we have
\begin{align*}
\sum_{k=1}^n\re\lj d_{k1}\rj^2\le& Cv_0^{-4}\sum_{k=1}^n\re\lj g_k(z_1)-g_k(z_2)\rj^2\\
=& \frac C{v_0^{4}n^2}\sum_{k=1}^n\re\lj\rtr\bq_k^*\bigg(\bd_k(z_1)-\bd_k(z_2)\bigg)\bq_k-\rtr\(\bd_k(z_1)-\bd_k(z_2)\)\rj^2\\
=& \frac{ C|z_1-z_2|^2}{v_0^{4}n^2}\sum_{k=1}^n\re\lj\rtr\bigg(\bq_k^*\bd_k(z_1)\bd_k(z_2)\bq_k\bigg)-\rtr\(\bd_k(z_1)\bd_k(z_2)\)\rj^2\\
\le& \frac{ C|z_1-z_2|^2}{v_0^{8}}
\end{align*}
and
\begin{align*}
\sum_{k=1}^n\re\lj d_{k2}\rj^2\le&256|z_1-z_2|^2\re\lj\rtr\zeta_k(z_1)\rtr\bigg(\bi_2+\frac1n\bq_k^*\bd_k^2(z_2)\bq_k\bigg)\zeta_k(z_2)\rj^2\\
\le&\frac{ C|z_1-z_2|^2}{v_0^{4}}.
\end{align*}
Applying
\begin{align*}
\re\lj\rtr\bq_k^*\bd_k^2(z)\bq_k-\rtr\bd_k^2(z)\rj^2\le Cnv_0^{-4}
\end{align*}
and
\begin{align*}
\re\lj g_k(z)\rj^2\le\frac Cn\re\|x_{kk}\|_Q^2+\frac C{n^2}\re\lj\rtr\bq_k^*\bd_k(z)\bq_k-\rtr\bd_k(z)\rj^2 \le\frac Cn,
\end{align*}
we have
\begin{align*}
\sum_{k=1}^n\re\lj d_{k3}+d_{k4}+d_{k5}\rj^2\le&\frac C{n^3}\sum_{k=1}^n\re\lj\rtr\left[\bd_k(z_1)-\bd_k(z_2)\right]\rj^2\\
\le&\frac{ C|z_1-z_2|^2}{n^3}\sum_{k=1}^n\re\left\|\bd_k(z_1)\bd_k(z_2)\right\|^2\le \frac{ C|z_1-z_2|^2}{n^2}.
\end{align*}
and
\begin{align*}
\sum_{k=1}^n\re\lj d_{k6}+d_{k7}\rj^2\le&\frac C{n^2}\sum_{k=1}^n\re\rtr\bigg(\bd_k^2(z_1)-\bd_k^2(z_2)\bigg)\bigg(\bd_k^2(\bar z_1)-\bd_k^2(\bar z_2)\bigg)\\
\le&\frac C{nv_0^2}\sum_{k=1}^n\re\left\|\bd_k(z_1)-\bd_k(z_2)\right\|^2\le \frac{ C|z_1-z_2|^2}{v_0^6}.
\end{align*}
Hence, (\ref{eq4}) is proved and the tightness of the process $M_n(z)-\re M_n(z)$ holds.

\section{appendix}

\begin{lemma}[Theorem A.37 in \cite{bai2010spectral}]\label{cll1}
Suppose $\ba$ and $\mb B$ are two $n\times n$ matrices and $\lambda_k$ and $\delta_k,k=1,cdots,n$, denote their singular values. If the singular values are arranged in descending order, then we have
\begin{align*}
\sum_{k=1}^n|\lambda_k-\delta_k|^2\le\rtr\left[\(\ba-\mb B\)\(\ba-\mb B\)^*\right].
\end{align*}
\end{lemma}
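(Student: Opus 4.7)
The plan is to reduce the claim to the Hermitian case of the Hoffman--Wielandt inequality via the standard $2\times 2$ dilation trick, and then prove the Hermitian version by the Birkhoff--von Neumann theorem applied to a doubly stochastic matrix of squared unitary entries.

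First I would pass from singular values to eigenvalues. For an arbitrary $n\times n$ matrix $\ba$, introduce the Hermitian dilation
\[
\widetilde\ba=\begin{pmatrix} 0 & \ba \\ \ba^{*} & 0 \end{pmatrix}\in\mathbb C^{2n\times 2n}.
\]
A direct computation shows that the eigenvalues of $\widetilde\ba$ are exactly $\pm\lambda_1,\ldots,\pm\lambda_n$, and that $\|\widetilde\ba-\widetilde\bb\|_F^2=\rtr\bigl[(\widetilde\ba-\widetilde\bb)(\widetilde\ba-\widetilde\bb)^*\bigr]=2\rtr\bigl[(\ba-\bb)(\ba-\bb)^*\bigr]$. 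So if the Hermitian Hoffman--Wielandt inequality
\[
\sum_{k=1}^{2n}\bigl|\mu_k(\widetilde\ba)-\mu_k(\widetilde\bb)\bigr|^2\le\|\widetilde\ba-\widetilde\bb\|_F^2
\]
is known, then sorting the eigenvalues $(\lambda_1,\ldots,\lambda_n,-\lambda_n,\ldots,-\lambda_1)$ and $(\delta_1,\ldots,\delta_n,-\delta_n,\ldots,-\delta_1)$ in decreasing order makes the left side equal to $2\sum_{k=1}^n|\lambda_k-\delta_k|^2$, and dividing by $2$ yields the desired estimate.

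Next I would prove the Hermitian Hoffman--Wielandt inequality for two Hermitian matrices $H$ and $K$ with decreasingly ordered eigenvalues $\{\alpha_k\}$, $\{\beta_k\}$. Write spectral decompositions $H=UD_\alpha U^*$, $K=VD_\beta V^*$, set the unitary $W=U^*V$, and expand
\[
\|H-K\|_F^2=\sum_k\alpha_k^2+\sum_k\beta_k^2-2\,\Re\,\rtr(D_\alpha W D_\beta W^*)=\sum_k\alpha_k^2+\sum_k\beta_k^2-2\sum_{i,j}|w_{ij}|^2\alpha_i\beta_j.
\]
The matrix $S=(|w_{ij}|^2)$ is doubly stochastic, so by the Birkhoff--von Neumann theorem it lies in the convex hull of permutation matrices. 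Hence
\[
\sum_{i,j}|w_{ij}|^2\alpha_i\beta_j\;\le\;\max_{\pi\in S_n}\sum_i\alpha_i\beta_{\pi(i)}=\sum_i\alpha_i\beta_i,
\]
the last equality being the standard rearrangement inequality for two decreasingly ordered sequences. Substituting back gives $\|H-K\|_F^2\ge\sum_k(\alpha_k-\beta_k)^2$, which is the Hermitian Hoffman--Wielandt inequality.

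The main obstacle is the rearrangement/Birkhoff step, because that is the only place where the ordering hypothesis on the singular values is actually used; everything else (the dilation, the expansion of the Frobenius norm, double stochasticity of $S$) is routine linear algebra. An entirely alternative route would be Lidskii's inequality for singular values combined with the fact that the $\ell^2$ norm of a vector dominated in the weak majorization sense is controlled; this avoids Birkhoff but still boils down to an ordered matching argument, so the essential difficulty is the same.
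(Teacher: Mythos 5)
Your proof is correct: the Hermitian dilation has eigenvalues $\pm\lambda_k$, the decreasing orderings match up exactly as you say because singular values are nonnegative, and the Birkhoff--von Neumann plus rearrangement step gives the Hermitian Hoffman--Wielandt inequality, so the reduction closes without gaps. The paper itself offers no proof of this lemma---it is simply quoted as Theorem A.37 of \cite{bai2010spectral}---and your argument is essentially the standard proof given in that reference (dilation to the Hermitian case, then the doubly stochastic matrix argument), so it serves as a valid self-contained substitute for the citation.
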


\begin{lemma}[Corollary 2.5 in \cite{yin2014rate}]\label{cll3}
Under the conditions of Theorem \ref{th1}, we have
\begin{itemize}
\item[(1)] $\bd(z)$ and $\bd_k(z)$ are all Type-$I$ matrices,
\item[(2)] $\vk$ and $\zk$ are all scalar matrices.
\end{itemize}
\end{lemma}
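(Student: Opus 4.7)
The plan is to introduce an algebraic \emph{quaternion dual} operation $D$ on $(2n) \times (2n)$ complex matrices and to show that both structural claims follow from the single fact that the relevant matrices are self-dual under this operation. Define $A^D = -\tilde J A^T \tilde J$, where $\tilde J$ is the $(2n) \times (2n)$ block-diagonal matrix with $n$ copies of $\mathbbm{j}$ on the diagonal. Direct $2 \times 2$ block computation yields the three basic identities $(A^D)^D = A$, $(AB)^D = B^D A^D$ (whence $(A^{-1})^D = (A^D)^{-1}$), and, crucially, $q^D = q^*$ whenever $q$ is a real quaternion $2\times 2$ block in the sense of the introduction. Call $A$ \emph{self-dual} if $A^D = A$. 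Writing out $A_{ii}^D = A_{ii}$ on a single diagonal block forces $A_{ii} = t \mb I_2$, while self-duality off the diagonal reads $A_{ji} = A_{ij}^D$; this is exactly the Type-I structure of the excerpt.

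For part (1), I will show that $\bw_n - z\bi_{2n}$ is self-dual and then invoke closure of self-duality under inversion. Because $\bw_n$ has real quaternion entries, the blockwise identity $q^D = q^*$ lifts to $\bw_n^D = \bw_n^* = \bw_n$ (the second equality is quaternion Hermiticity). The scalar $z\bi_{2n}$ is trivially self-dual, hence so is $\bw_n - z\bi_{2n}$, and
\[
\bd(z)^D = \big[(\bw_n - z\bi_{2n})^{-1}\big]^D = \big[(\bw_n - z\bi_{2n})^D\big]^{-1} = (\bw_n - z\bi_{2n})^{-1} = \bd(z),
\]
so $\bd(z)$ is Type-I. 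The identical argument applied to $\bw_{nk}$ gives that $\bd_k(z)$ is Type-I.

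For part (2), decompose $\varepsilon_k(z) = n^{-1/2} x_{kk} - n^{-1} \bq_k^* \bd_k(z) \bq_k + \re m_n(z) \bi_2$ and treat each summand. The diagonal entry $x_{kk}$ of a Hermitian quaternion matrix is a real scalar, hence Type-T, and $\re m_n(z) \bi_2$ is trivially Type-T. For the middle term, extend $D$ to rectangular matrices by the same formula with $\tilde J$'s of matching size; because each block of $\bq_k$ is a real quaternion, $\bq_k^* = \bq_k^D$. The anti-homomorphism $(XYZ)^D = Z^D Y^D X^D$ together with $\bd_k^D = \bd_k$ and $(\bq_k^*)^D = \bq_k$ then gives
\[
(\bq_k^* \bd_k \bq_k)^D = \bq_k^D \bd_k^D (\bq_k^*)^D = \bq_k^D \bd_k \bq_k = \bq_k^* \bd_k \bq_k,
\]
so $\bq_k^* \bd_k \bq_k$ is a self-dual $2\times 2$ matrix, hence Type-T. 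Consequently $\varepsilon_k(z)$ is Type-T, and since $\zeta_k(z)^{-1} = (z + \re m_n(z))\bi_2 - \varepsilon_k(z)$ is a non-zero scalar $2\times 2$ matrix, so is $\zeta_k(z)$.

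The main technical obstacle is the bookkeeping for $D$: one has to verify $(AB)^D = B^D A^D$, $(A^D)^D = A$, and $q^D = q^*$ on real quaternion blocks as honest $2\times 2$ calculations, and check that the formula behaves correctly on rectangular matrices for the argument with $\bq_k$. Once these elementary identities are in place, the lemma collapses to the single observation that self-duality is preserved both under inversion and under the bilinear form $\bq \mapsto \bq^* A \bq$ whenever $\bq$ has real quaternion blocks and $A$ is self-dual.
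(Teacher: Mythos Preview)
The paper does not supply a proof of this lemma; it is quoted as Corollary~2.5 of the cited reference \cite{yin2014rate}. Your argument is correct and self-contained: the operation $A^D=-\tilde J A^T\tilde J$ with $\tilde J=\mathrm{diag}(\mathbbm j,\dots,\mathbbm j)$ is exactly the algebraic device that encodes the Type-I structure, and your three identities $(A^D)^D=A$, $(AB)^D=B^DA^D$, $q^D=q^*$ for real quaternion blocks are routine $2\times2$ computations (using $\mathbbm j^2=-\bi_2$ and $\mathbbm j^T=-\mathbbm j$). Self-duality of $\bw_n-z\bi_{2n}$ then gives self-duality of $\bd(z)$, and the bilinear-form argument for $\bq_k^*\bd_k\bq_k$ is the right way to handle part~(2). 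This is in fact the standard mechanism used in the quaternion random-matrix literature, so your approach almost certainly coincides with that of the cited source.

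One small point worth making explicit in a final write-up: when you extend $D$ to the rectangular $\bq_k$ you are using two $\tilde J$'s of different sizes (a $2\times2$ one on the right and a $(2n-2)\times(2n-2)$ one on the left), and the anti-homomorphism identity $(XYZ)^D=Z^DY^DX^D$ needs the intermediate $\tilde J$'s to match up; this is immediate from the block-diagonal form of $\tilde J$, but it is the only place where the bookkeeping could in principle go wrong.
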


\begin{lemma}[See appendix A.1.4 in \cite{bai2010spectral}]\label{cll2}
Suppose that the matrix $\boldsymbol\Sigma$ has the partition as given by $\begin{pmatrix}\boldsymbol\Sigma_{11}&\boldsymbol\Sigma_{12}\\ \boldsymbol\Sigma_{21}&\boldsymbol\Sigma_{22}\end{pmatrix}$. If $\boldsymbol\Sigma$ and $\boldsymbol\Sigma_{11}$ are  nonsingular, then the inverse of $\boldsymbol\Sigma$ has the form
\begin{align*}
\boldsymbol\Sigma^{-1}=\begin{pmatrix}
\boldsymbol\Sigma_{11}^{-1}+\boldsymbol\Sigma_{11}^{-1}\boldsymbol\Sigma_{12}\boldsymbol\Sigma_{22.1}^{-1}\boldsymbol\Sigma_{21}\boldsymbol\Sigma_{11}^{-1}&
-\boldsymbol\Sigma_{11}^{-1}\boldsymbol\Sigma_{12}\boldsymbol\Sigma_{22.1}^{-1}\\
-\boldsymbol\Sigma_{22.1}^{-1}\boldsymbol\Sigma_{21}\boldsymbol\Sigma_{11}^{-1}&\boldsymbol\Sigma_{22.1}^{-1}
\end{pmatrix}
\end{align*}
where $\boldsymbol\Sigma_{22.1}=\boldsymbol\Sigma_{22}-\boldsymbol\Sigma_{21}\boldsymbol\Sigma_{11}^{-1}\boldsymbol\Sigma_{12}$.
\end{lemma}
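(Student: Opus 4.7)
The plan is to establish the block inversion identity by exhibiting an explicit block $LDU$ factorization of $\boldsymbol\Sigma$ and then inverting each factor in turn. Since $\boldsymbol\Sigma_{11}$ is nonsingular by hypothesis, the products $\boldsymbol\Sigma_{21}\boldsymbol\Sigma_{11}^{-1}$ and $\boldsymbol\Sigma_{11}^{-1}\boldsymbol\Sigma_{12}$ are well defined, and the first step is to verify the factorization $\boldsymbol\Sigma = LDU$ with
$$L = \begin{pmatrix} I & 0 \\ \boldsymbol\Sigma_{21}\boldsymbol\Sigma_{11}^{-1} & I \end{pmatrix}, \quad D = \begin{pmatrix} \boldsymbol\Sigma_{11} & 0 \\ 0 & \boldsymbol\Sigma_{22.1} \end{pmatrix}, \quad U = \begin{pmatrix} I & \boldsymbol\Sigma_{11}^{-1}\boldsymbol\Sigma_{12} \\ 0 & I \end{pmatrix}.$$
Direct block multiplication gives matching $(1,1)$, $(1,2)$ and $(2,1)$ entries by inspection, while the $(2,2)$ entry reduces to $\boldsymbol\Sigma_{21}\boldsymbol\Sigma_{11}^{-1}\boldsymbol\Sigma_{12} + \boldsymbol\Sigma_{22.1} = \boldsymbol\Sigma_{22}$, which is precisely the definition of $\boldsymbol\Sigma_{22.1}$.

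Next I would observe that $\boldsymbol\Sigma_{22.1}$ must itself be invertible: combining the factorization with multiplicativity of the determinant yields $\det\boldsymbol\Sigma = \det\boldsymbol\Sigma_{11}\cdot\det\boldsymbol\Sigma_{22.1}$, and both $\det\boldsymbol\Sigma$ and $\det\boldsymbol\Sigma_{11}$ are nonzero by assumption. The factors $L$ and $U$ are block triangular with identity diagonal blocks, hence automatically invertible, with
$$L^{-1} = \begin{pmatrix} I & 0 \\ -\boldsymbol\Sigma_{21}\boldsymbol\Sigma_{11}^{-1} & I \end{pmatrix}, \quad U^{-1} = \begin{pmatrix} I & -\boldsymbol\Sigma_{11}^{-1}\boldsymbol\Sigma_{12} \\ 0 & I \end{pmatrix},$$
while $D^{-1}$ is the block diagonal matrix with blocks $\boldsymbol\Sigma_{11}^{-1}$ and $\boldsymbol\Sigma_{22.1}^{-1}$.

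The final step is the mechanical computation $\boldsymbol\Sigma^{-1} = U^{-1}D^{-1}L^{-1}$. Multiplying $D^{-1}L^{-1}$ first produces a matrix whose first block row is $(\boldsymbol\Sigma_{11}^{-1},\,0)$ and whose second block row is $(-\boldsymbol\Sigma_{22.1}^{-1}\boldsymbol\Sigma_{21}\boldsymbol\Sigma_{11}^{-1},\,\boldsymbol\Sigma_{22.1}^{-1})$; left-multiplying by $U^{-1}$ then adds $-\boldsymbol\Sigma_{11}^{-1}\boldsymbol\Sigma_{12}$ times the second row to the first. This yields the stated $(1,1)$ entry $\boldsymbol\Sigma_{11}^{-1} + \boldsymbol\Sigma_{11}^{-1}\boldsymbol\Sigma_{12}\boldsymbol\Sigma_{22.1}^{-1}\boldsymbol\Sigma_{21}\boldsymbol\Sigma_{11}^{-1}$, the $(1,2)$ entry $-\boldsymbol\Sigma_{11}^{-1}\boldsymbol\Sigma_{12}\boldsymbol\Sigma_{22.1}^{-1}$, and leaves the second block row of $D^{-1}L^{-1}$ unchanged, matching the claimed formula.

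Because this is a standard textbook identity, explicitly cited from \cite{bai2010spectral}, no creative step is involved; the only obstacle is bookkeeping, namely keeping track of signs and of the non-commutativity of block products when inverting $LDU$. A shorter alternative that bypasses the factorization entirely is to verify $\boldsymbol\Sigma X = I$ directly, where $X$ is the right-hand side of the stated formula; the four resulting block identities each simplify immediately using the definition of $\boldsymbol\Sigma_{22.1}$, but at the cost of slightly heavier notation than the $LDU$ route offers.
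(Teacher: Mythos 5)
Your proof is correct: the paper itself gives no argument for this lemma (it simply cites appendix A.1.4 of Bai and Silverstein), and your block $LDU$ factorization with the Schur complement $\boldsymbol\Sigma_{22.1}$ is exactly the standard argument found there, including the necessary observation that $\det\boldsymbol\Sigma=\det\boldsymbol\Sigma_{11}\det\boldsymbol\Sigma_{22.1}\neq0$ forces $\boldsymbol\Sigma_{22.1}$ to be invertible. All block computations check out, so nothing further is needed.
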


\begin{lemma}[Lemma 2.18 in \cite{yin2014rate}]\label{cll4}
Let ${\mathbf A}$ be a $2n\times2n$ non-random matrix and ${\mathbf X}=(x_1',\cdots,x_n')'$ be a random quaternion vector of independent entries, where for $1\leq j\leq n$,
 $$x_j=\left(
        \begin{array}{cc}
          e_j+f_j\cdot i & c_j+d_j\cdot i\\
          -c_j+d_j\cdot i & e_j-f_j\cdot i \\
        \end{array}
      \right)
 =\left(
        \begin{array}{cc}
          \alpha_j & \beta_j \\
          -\bar \beta_j & \bar \alpha_j \\
        \end{array}
      \right)
.$$
Assume that $\re x_j=\mb 0
$, $\re\left\|x_j\right\|_{Q}^2=1$, and $\re\left\|x_j\right\|_{Q}^l\leq \phi_l$. Then, for any $q\geq 1$, we have
$$\re\left|\rtr{\mathbf X}^*{\mathbf A}{\mathbf X}-\rtr {\mathbf A} \right|^q \leq C_q \(\(\phi_4\rtr\({\mathbf A}{\mathbf A^*}\)\)^{q/2}+\phi_{2q}\rtr\({\mathbf A}{\mathbf A^*}\)^{q/2}\),$$
where $C_p$ is a constant depending on $p$ only.
\end{lemma}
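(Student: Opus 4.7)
The plan is to prove this quaternion analogue of the Bai--Silverstein quadratic-form moment inequality (Lemma B.26 of \cite{bai2010spectral}) by the standard martingale decomposition plus Burkholder argument, adapted to the $2\times 2$ block structure of quaternion variables. First, partition $\ba=(A_{ij})_{i,j=1}^n$ into $2\times 2$ blocks. Cyclicity of the trace together with the quaternion identity $x_j x_j^*=\|x_j\|_Q^2 \bi_2$ gives the algebraic identity
\begin{align*}
\rtr(\bx^*\ba\bx)-\rtr\ba
=\sum_{j=1}^n(\|x_j\|_Q^2-1)\rtr(A_{jj})+\sum_{i\ne j}\rtr(x_i^*A_{ij}x_j).
\end{align*}
Let $\mathcal F_k=\sigma(x_1,\ldots,x_k)$ and $\re_k(\cdot)=\re(\cdot\mid\mathcal F_k)$. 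Using $\re x_j=\mb 0$, the $(\re_{k-1}-\re_k)$ differences isolate exactly the $x_k$-dependent contributions, producing the martingale difference sequence
\begin{align*}
Y_k=(\|x_k\|_Q^2-1)\rtr(A_{kk})+\rtr(x_k^*u_k)+\rtr(v_k^*x_k),
\end{align*}
where $u_k=\sum_{j<k}A_{kj}x_j$ and $v_k=\sum_{i<k}A_{ik}^*x_i$ are $\mathcal F_{k-1}$-measurable $2\times 2$ matrices, so that $\rtr(\bx^*\ba\bx)-\rtr\ba=\sum_{k=1}^n Y_k$.

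Next I would apply Burkholder's inequality (Lemma \ref{cll5}) in the form
\begin{align*}
\re\lj\rtr(\bx^*\ba\bx)-\rtr\ba\rj^q
\le C_q\,\re\Bigl(\sum_{k=1}^n\re_{k-1}|Y_k|^2\Bigr)^{q/2}+C_q\sum_{k=1}^n\re|Y_k|^q.
\end{align*}
The conditional second-moment bound rests on the quaternion isotropy identity $\re[x_k^*Mx_k]=\tfrac12\rtr(M)\bi_2$ for any fixed $2\times 2$ matrix $M$, which is a direct consequence of $\re|\alpha_k|^2=\re|\beta_k|^2=1/2$ together with $\re\alpha_k^2=\re\beta_k^2=\re(\alpha_k\beta_k)=\re(\alpha_k\bar\beta_k)=0$. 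Expanding the linear-in-$x_k$ pieces of $Y_k$ in the four scalars $\alpha_k,\beta_k,\bar\alpha_k,\bar\beta_k$ and invoking this identity gives $\re_{k-1}|\rtr(x_k^*u_k)|^2=\tfrac12\rtr(u_ku_k^*)$, so
\begin{align*}
\re_{k-1}|Y_k|^2\le C\bigl((\phi_4-1)|\rtr A_{kk}|^2+\rtr(u_ku_k^*)+\rtr(v_kv_k^*)\bigr),
\end{align*}
with the cross term between the diagonal and off-diagonal contributions absorbed by Cauchy--Schwarz. Summing in $k$, the identity $\re\,\rtr(u_ku_k^*)=\sum_{j<k}\rtr(A_{kj}A_{kj}^*)$ telescopes to $\rtr(\ba\ba^*)$, yielding the $(\phi_4\rtr(\ba\ba^*))^{q/2}$ contribution after raising to the $q/2$-th power and applying $\re X^{q/2}\le(\re X)^{q/2}\cdot(\text{power-mean})$ or, more properly, a second application of Burkholder when $q>2$.

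For the $\sum_k\re|Y_k|^q$ term, conditioning on $\mathcal F_{k-1}$ reduces each summand to a $q$-th moment of a linear form in the four scalars of $x_k$ plus a power of $\|x_k\|_Q^2-1$; a Rosenthal-type bound then gives
\begin{align*}
\re_{k-1}|Y_k|^q\le C_q\phi_{2q}\bigl(|\rtr A_{kk}|^q+(\rtr(u_ku_k^*))^{q/2}+(\rtr(v_kv_k^*))^{q/2}\bigr).
\end{align*}
Taking expectations and summing, another application of Burkholder to the independent-sum structure of $u_k$ telescopes the second and third pieces to a multiple of $(\rtr(\ba\ba^*))^{q/2}$, while $\sum_k|\rtr A_{kk}|^q\le(\sum_k|\rtr A_{kk}|^2)^{q/2}\le C(\rtr(\ba\ba^*))^{q/2}$ disposes of the diagonal piece. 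Combining both contributions yields the stated bound. The main obstacle is the bookkeeping for the $2\times 2$ block structure: one must verify throughout that the quaternion covariance identities collapse the expectations $\re[x_k^*Mx_k]$ and $\re[x_kMx_k^*]$ to scalar multiples of $\bi_2$, so that the computation proceeds exactly as in the complex case but with an extra factor of $1/2$ absorbed into the constant $C_q$. Once this moment catalogue is fixed, the argument follows the complex proof in \cite{bai2010spectral} verbatim.
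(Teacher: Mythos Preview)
The paper does not give its own proof of this lemma: it is imported verbatim as Lemma~2.18 of \cite{yin2014rate} and stated in the appendix without argument. So there is no ``paper's proof'' to compare against. Your outline is the standard Bai--Silverstein martingale-plus-Burkholder argument (Lemma~B.26 of \cite{bai2010spectral}) transported to the $2\times2$ block setting, and that is indeed how the result is obtained in the cited source.

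Two points deserve care. First, the isotropy identity $\re[x_k^*Mx_k]=\tfrac12\rtr(M)\bi_2$ that you invoke requires the detailed second-moment relations $\re|\alpha_k|^2=\re|\beta_k|^2=\tfrac12$ and $\re\alpha_k^2=\re\beta_k^2=\re(\alpha_k\beta_k)=\re(\alpha_k\bar\beta_k)=0$, and these are \emph{not} among the hypotheses of the lemma as stated (they appear only in Theorem~\ref{th1} and Lemma~\ref{cll6}). Fortunately you only need the inequality $\re_{k-1}|\rtr(x_k^*u_k)|^2\le C\,\rtr(u_ku_k^*)$, and this follows from the Frobenius Cauchy--Schwarz bound $|\rtr(x_k^*u_k)|^2\le\rtr(x_kx_k^*)\rtr(u_ku_k^*)=2\|x_k\|_Q^2\rtr(u_ku_k^*)$ together with $\re\|x_k\|_Q^2=1$, so the hypotheses you actually have suffice. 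Second, the step $\sum_k|\rtr A_{kk}|^q\le(\sum_k|\rtr A_{kk}|^2)^{q/2}$ is only valid for $q\ge 2$; for $1\le q<2$ one instead derives the bound from the case $q=2$ via $\re|Z|^q\le(\re|Z|^2)^{q/2}$. With these two adjustments your sketch goes through.
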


\begin{lemma}[Burkholder's inequality ]\label{cll5}
Let $\{ {{\mathbf X}_k}\} $ be a complex martingale difference sequence with respect to the increasing $\sigma$-field. Then, for $p > 1,$ $${\rm E}{\left| {\sum_k {{{\mathbf  X}_k}} } \right|^p} \le {K_p}{\rm E}{\left({\sum_k{\left| {{{\mathbf  X}_k}} \right|} ^2}\right)^{p/2}}.$$
\end{lemma}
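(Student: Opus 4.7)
The plan is to reduce the complex case to the classical real-valued Burkholder inequality, which is the substantive content. Write $\mathbf{X}_k = \mathbf{U}_k + i\mathbf{V}_k$ with $\mathbf{U}_k = \Re\mathbf{X}_k$ and $\mathbf{V}_k = \Im\mathbf{X}_k$. Because the conditional expectation with respect to the filtration is $\mathbb{R}$-linear and commutes with taking real/imaginary parts, both $\{\mathbf{U}_k\}$ and $\{\mathbf{V}_k\}$ are real-valued martingale difference sequences with respect to the same increasing $\sigma$-fields. This reduces the task to the scalar real-valued form of Burkholder's inequality and a combining step.

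Next, I would invoke the classical real-valued square-function inequality: for $p>1$ and a real martingale difference sequence $\{Y_k\}$,
\begin{equation*}
\mathbb{E}\Bigl|\sum_k Y_k\Bigr|^{p} \leq K_p^{\mathrm{real}}\, \mathbb{E}\Bigl(\sum_k Y_k^{2}\Bigr)^{p/2}.
\end{equation*}
Applying this separately to $\{\mathbf{U}_k\}$ and $\{\mathbf{V}_k\}$ gives $\mathbb{E}|\sum_k\mathbf{U}_k|^p\le K_p^{\mathrm{real}}\mathbb{E}(\sum_k\mathbf{U}_k^2)^{p/2}$ and the analogous bound in $\mathbf{V}_k$. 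I then combine these using the elementary pointwise identity and numerical inequality
\begin{equation*}
\Bigl|\sum_k\mathbf{X}_k\Bigr|^p = \Bigl(\bigl|\sum_k\mathbf{U}_k\bigr|^{2}+\bigl|\sum_k\mathbf{V}_k\bigr|^{2}\Bigr)^{p/2} \leq c_p\Bigl(\bigl|\sum_k\mathbf{U}_k\bigr|^{p}+\bigl|\sum_k\mathbf{V}_k\bigr|^{p}\Bigr),
\end{equation*}
with $c_p=2^{p/2-1}$ when $p\ge 2$ and $c_p=1$ when $1<p\le 2$. Finally, since $\mathbf{U}_k^{2}+\mathbf{V}_k^{2}=|\mathbf{X}_k|^{2}$, I have $\sum_k \mathbf{U}_k^{2}\leq\sum_k |\mathbf{X}_k|^{2}$ and likewise for $\mathbf{V}_k$, so monotonicity of $t\mapsto t^{p/2}$ converts the two real-part bounds into the claimed bound with a constant $K_p=2c_p K_p^{\mathrm{real}}$.

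The main obstacle is the real-valued Burkholder inequality itself, which is non-elementary. The standard proofs use one of two routes. For $p\ge 2$ there is an induction/orthogonality argument: expand $|S_n|^p$ around $|S_{n-1}|^p$, use the martingale property $\mathbb{E}[S_{n-1}\mathbf{Y}_n\mid\mathcal{F}_{n-1}]=0$ to kill the linear term, control the remainder by H\"older, and close an inductive bound on $\mathbb{E}|S_n|^p$ by the square function. For the delicate range $1<p<2$, the classical route is Burkholder's good-$\lambda$ method comparing $\max_{n}|S_n|$ and the square function $(\sum_k Y_k^2)^{1/2}$ via distribution-function inequalities, combined with Doob's maximal inequality; an alternative is the Davis decomposition $Y_k = Y_k' + Y_k''$ into a part with $|Y_k''|\le 2\max_{j<k}|Y_j|$ and a predictable part of bounded variation, which handles the two pieces separately. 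Since Lemma \ref{cll5} is a quoted classical result, I would simply cite Burkholder's original paper or a standard probability text rather than reproduce this argument, the contribution of the present write-up being the reduction from the complex (and, as the surrounding text makes clear, trace-of-matrix-valued) setting to the real scalar inequality.
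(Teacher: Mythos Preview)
Your proposal is correct, and in fact you have anticipated the paper's own treatment: the paper states Lemma~\ref{cll5} as the classical Burkholder inequality and gives no proof whatsoever, simply invoking it where needed. Your reduction of the complex case to the real one via $\mathbf{X}_k=\mathbf{U}_k+i\mathbf{V}_k$ is sound and standard, and your closing remark that one should cite Burkholder rather than reprove the real inequality matches exactly what the paper does.
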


\begin{lemma}\label{cll6}
Let ${\mb A}$ and ${\mb B}$ be two $2n\times2n$ non-random Type-\uppercase\expandafter{\romannumeral1} matrices while ${\mathbf X}=(x_1',\cdots,x_n')'$ be a random quaternion vector of independent entries, where for $1\leq j\leq n$,
 $$x_j=\left(
        \begin{array}{cc}
          e_j+f_j\cdot i & g_j+h_j\cdot i\\
          -g_j+h_j\cdot i & e_j-f_j\cdot i \\
        \end{array}
      \right)
 =\left(
        \begin{array}{cc}
          \alpha_j & \beta_j \\
          -\bar \beta_j & \bar \alpha_j \\
        \end{array}
      \right).
$$
Assume that $\re x_j=\mb 0$, $\re\|x_j\|_Q^4=M_4$ and $\re\left\|x_j\right\|_{Q}^2=1$ such that
$$\re|\alpha_j|^2=1/2,\quad \re|\beta_j|^2=1/2,\quad\re\alpha_j^2=0,\quad \re\beta_j^2=0,\quad\re(\alpha_j\beta_j)=\re(\alpha_j\bar\beta_j)=0.$$ Then, splitting $\mb A$ and $\mb B$ into $2\times 2$ blocks and denoting $\mb A=\(\mathbbm{a}_{j,k}\)$ and $\mb B=\(\mathbbm{b}_{j,k}\)$, we have
\begin{align*}
  &\re\(\rtr{\mathbf X}^*{\mathbf A}{\mathbf X}-\rtr {\mathbf A}\)\(\rtr{\mathbf X}^*{\mathbf B}{\mathbf X}-\rtr {\mathbf B}\)\\\notag
  =&\re \(\|x_1\|_Q^4-\frac{3}{2}\)\sum_{j}\rtr \mathbbm{a}_{j,j}\rtr \mathbbm{b}_{j,j}+\rtr \mb A\mb B.
\end{align*}
\end{lemma}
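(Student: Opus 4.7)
The plan is to expand both quadratic forms into sums over $2\times 2$ blocks of $\mb X$, use independence of the $x_j$'s together with the quaternion identity $x_jx_j^*=\|x_j\|_Q^2\mb I_2$ to eliminate most cross-terms, and then perform the residual off-diagonal calculation by combining the specific second moments of $(\alpha_j,\beta_j)$ with the Type-I block symmetry relating $\mathbbm a_{j,k}$ to $\mathbbm a_{k,j}$.

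Writing $\rtr\mb X^*\mb A\mb X=\sum_{j,k}\rtr(x_j^*\mathbbm a_{j,k}x_k)$, the diagonal blocks contribute $\rtr(x_j^*\mathbbm a_{j,j}x_j)=\|x_j\|_Q^2\rtr\mathbbm a_{j,j}$ because $\mathbbm a_{j,j}$ is a scalar multiple of $\mb I_2$ (Type-T). I would therefore decompose $\rtr\mb X^*\mb A\mb X-\rtr\mb A=A_D+A_O$ with $A_D=\sum_j(\|x_j\|_Q^2-1)\rtr\mathbbm a_{j,j}$ and $A_O=\sum_{j\neq k}\rtr(x_j^*\mathbbm a_{j,k}x_k)$, and similarly $B_D+B_O$ for $\mb B$. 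The cross terms $\re A_DB_O$ and $\re A_OB_D$ vanish because every summand of $A_O$ contains a mean-zero factor ($x_j$ or $x_k$) that is independent of the single-index factor from $B_D$; the diagonal-diagonal contribution reduces, by independence and $\re(\|x_j\|_Q^2-1)^2=M_4-1$, to $\re A_DB_D=(M_4-1)\sum_j\rtr\mathbbm a_{j,j}\rtr\mathbbm b_{j,j}$.

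The heart of the proof is the evaluation of $\re A_OB_O$. Substituting $x_j=\left(\begin{smallmatrix}\alpha_j&\beta_j\\-\bar\beta_j&\bar\alpha_j\end{smallmatrix}\right)$ the trace $\rtr(x_j^*\mathbbm a_{j,k}x_k)$ expands as a sum of eight monomials, each linear in one of $\alpha_j,\bar\alpha_j,\beta_j,\bar\beta_j$ and in one of $\alpha_k,\bar\alpha_k,\beta_k,\bar\beta_k$. Independence and $\re x_j=\mb 0$ collapse the quadruple sum to the configuration $\{j,k\}=\{l,m\}$, i.e., the sub-cases $(l,m)=(j,k)$ and $(l,m)=(k,j)$. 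In each sub-case the hypotheses $\re\alpha^2=\re\beta^2=\re(\alpha\beta)=\re(\alpha\bar\beta)=0$ eliminate all mixed monomial pairings and leave only those in which the product in $x_j$ (and in $x_k$) is a matched $|\alpha|^2$ or $|\beta|^2$, each surviving pair contributing $\tfrac14$. Writing $\mathbbm a_{j,k}=\left(\begin{smallmatrix}a&b\\c&d\end{smallmatrix}\right)$ and $\mathbbm b_{j,k}=\left(\begin{smallmatrix}a'&b'\\c'&d'\end{smallmatrix}\right)$, a careful tally yields
\begin{align*}
\re\rtr(x_j^*\mathbbm a_{j,k}x_k)\rtr(x_j^*\mathbbm b_{j,k}x_k) &= \tfrac12(ad'+a'd-bc'-b'c),\\
\re\rtr(x_j^*\mathbbm a_{j,k}x_k)\rtr(x_k^*\mathbbm b_{k,j}x_j) &= \tfrac12\rtr(\mathbbm a_{j,k}\mathbbm b_{k,j}).
\end{align*}
The Type-I relation $\mathbbm b_{k,j}=\left(\begin{smallmatrix}d'&-b'\\-c'&a'\end{smallmatrix}\right)$ identifies $ad'+a'd-bc'-b'c$ with $\rtr(\mathbbm a_{j,k}\mathbbm b_{k,j})$, so the two sub-cases combine to $\re A_OB_O=\sum_{j\neq k}\rtr(\mathbbm a_{j,k}\mathbbm b_{k,j})$.

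Since $\mathbbm a_{j,j}\mathbbm b_{j,j}$ is again a scalar multiple of $\mb I_2$, one has $\rtr(\mathbbm a_{j,j}\mathbbm b_{j,j})=\tfrac12\rtr\mathbbm a_{j,j}\rtr\mathbbm b_{j,j}$, whence $\rtr\mb A\mb B=\tfrac12\sum_j\rtr\mathbbm a_{j,j}\rtr\mathbbm b_{j,j}+\sum_{j\neq k}\rtr(\mathbbm a_{j,k}\mathbbm b_{k,j})$. Assembling $\re A_DB_D$ and $\re A_OB_O$ through this identity produces $(M_4-\tfrac32)\sum_j\rtr\mathbbm a_{j,j}\rtr\mathbbm b_{j,j}+\rtr\mb A\mb B$, which is the claimed formula. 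The hard part will be the monomial bookkeeping in the off-diagonal step: each sub-case is a priori a sum over $8\times 8$ pairs, the six moment vanishings must be applied consistently, and the eight survivors in each sub-case must be repackaged as the entries of a single $2\times 2$ trace. The Type-I symmetry $\mathbbm a_{k,j}=\mathbbm j\mathbbm a_{j,k}^T\mathbbm j^{-1}$ is essential for compressing the first sub-case into $\rtr(\mathbbm a_{j,k}\mathbbm b_{k,j})$; without it the two sub-cases would not combine and the distinctive coefficient $M_4-\tfrac32$ (rather than $M_4-1$) would not appear.
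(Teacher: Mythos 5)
Your proposal is correct and follows essentially the same route as the paper's proof: expand the quadratic forms over $2\times2$ blocks, use independence and the moment hypotheses to kill all but the matched pairings, compute the two off-diagonal sub-cases via the entries of $x_kx_j^*$, and invoke the Type-I relation between $\mathbbm b_{j,k}$ and $\mathbbm b_{k,j}$ (and the Type-T diagonal blocks) to assemble $\rtr\mb A\mb B$ and the coefficient $M_4-\tfrac32$. The only difference is cosmetic bookkeeping: you center the diagonal contribution first (getting $M_4-1$ and then $-\tfrac12$ from the diagonal of $\rtr\mb A\mb B$), whereas the paper keeps the uncentered expansion and subtracts $\rtr\mb A\rtr\mb B$ at the end.
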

\begin{proof}
  At first, obviously
  \begin{align*}
  &\re\(\rtr{\mathbf X}^*{\mathbf A}{\mathbf X}-\rtr {\mathbf A}\)\(\rtr{\mathbf X}^*{\mathbf B}{\mathbf X}-\rtr {\mathbf B}\)\\\notag
  =&\re\(\sum_{j,k}\rtr \mathbbm{a}_{j,k} x_k x_j^*\)\(\sum_{j,k}\rtr \mathbbm{b}_{j,k} x_k x_j^*\)-\rtr {\mathbf A}\rtr {\mathbf B}\\\notag
  =&\re\Bigg(\sum_{j}\rtr \mathbbm{a}_{j,j} x_j x_j^* \rtr \mathbbm{b}_{j,j} x_j x_j^*+\sum_{j\neq k}\rtr \mathbbm{a}_{j,k}x_k x_j^* \rtr \mathbbm{b}_{j,k} x_k x_j^*\\\notag
  &+\sum_{j\neq k}\rtr \mathbbm{a}_{j,k} x_k x_j^*\rtr \mathbbm{b}_{k,j}x_j x_k^* +\sum_{j\neq k}\rtr \mathbbm{a}_{j,j} x_j x_j^*\rtr \mathbbm{b}_{k,k} x_k x_k^*\Bigg)-\rtr {\mathbf A}\rtr {\mathbf B}.\notag
\end{align*}

Next, we will compute the above expression term by term. Notice that both ${\mb A}$ and ${\mb B}$ are Type-\uppercase\expandafter{\romannumeral1} matrices, we know that for all $1\leq j\leq n$, $\mathbbm{a}_{j,j}$ and $\mathbbm{b}_{j,j}$ are all Type-T matrices. Thus we obtain that
\begin{align*}
  \re\sum_{j}\rtr \mathbbm{a}_{j,j} x_j x_j^* \rtr \mathbbm{b}_{j,j} x_j x_j^*=\re \|x_1\|_Q^4\sum_{j}\rtr \mathbbm{a}_{j,j}\rtr \mathbbm{b}_{j,j},
\end{align*}
and
\begin{align*}
  &\re\sum_{j\neq k}\rtr \mathbbm{a}_{j,j} x_j x_j^*\rtr \mathbbm{b}_{k,k}x_k x_k^* =\(\re\|x_1\|_Q^2\)^2\sum_{j\neq k}\rtr \mathbbm{a}_{j,j}\rtr \mathbbm{b}_{k,k}\\
  =&\(\re\|x_1\|_Q^2\)^2\sum_{j,k}\rtr \mathbbm{a}_{j,j}\rtr \mathbbm{b}_{k,k}-\(\re\|x_1\|_Q^2\)^2\sum_{j}\rtr \mathbbm{a}_{j,j}\rtr \mathbbm{b}_{j,j}\\
    =&\rtr \mb A\rtr \mb B-\sum_{j}\rtr \mathbbm{a}_{j,j}\rtr \mathbbm{b}_{j,j}.
\end{align*}
Moreover, denote by $y_{j,k}=\left(
                               \begin{array}{cc}
                                 \varpi_{j,k} & \omega_{j,k} \\
                                 -\bar \omega_{j,k} & \bar\varpi_{j,k} \\
                               \end{array}
                             \right)$
the $x_k x_j^*$
where
$$\varpi_{j,k}=
     \alpha_k\bar\alpha_j+\beta_k\bar \beta_j \qquad {\rm and} \qquad \omega_{j,k}=-\alpha_k\beta_j+\beta_k\alpha_j.$$
From the above expressions and conditions, we have for $j\neq k$,
$$\re\varpi_{j,k}^2=\re\omega_{j,k}^2=\re\varpi_{j,k}\omega_{j,k}=\re\bar\varpi_{j,k}\omega_{j,k}=0,\quad \re|\varpi_{j,k}|^2=\re|\omega_{j,k}|^2=\frac{1}{2}.$$
By calculating, one finds
\begin{align*}
  &\re\sum_{j\neq k}\rtr \mathbbm{a}_{j,k} x_k x_j^*\rtr \mathbbm{b}_{j,k}x_k x_j^* \\
  =& \re\sum_{j\neq k}\(a_{j,k}^{lu}\varpi_{j,k}-a_{j,k}^{ru}\bar\omega_{j,k}+a_{j,k}^{ld}\omega_{j,k}+a_{j,k}^{rd}\bar\varpi_{j,k}\)
  \(b_{j,k}^{lu}\varpi_{j,k}-b_{j,k}^{ru}\bar\omega_{j,k}+b_{j,k}^{ld}\omega_{j,k}+b_{j,k}^{rd}\bar\varpi_{j,k}\)\\
  =&\frac{1}{2}\sum_{j\neq k}\(a_{j,k}^{lu}b_{j,k}^{rd}-a_{j,k}^{ru}b_{j,k}^{ld}-a_{j,k}^{ld}b_{j,k}^{ru}+a_{j,k}^{rd}b_{j,k}^{lu}\)\\
  =&\frac{1}{2}\sum_{j\neq k}\rtr\(\left(
                           \begin{array}{cc}
                             a_{j,k}^{lu} & a_{j,k}^{ru}\\
                             a_{j,k}^{ld} & a_{j,k}^{rd} \\
                           \end{array}
                         \right)\left(
                                  \begin{array}{cc}
                                    b_{j,k}^{rd} & -b_{j,k}^{ru} \\
                                    -b_{j,k}^{ld} & b_{j,k}^{lu} \\
                                  \end{array}
                                \right)
                         \)
  =\frac{1}{2}\sum_{j\neq k}\rtr\mathbbm{a}_{j,k}\mathbbm{b}_{k,j}=\frac{1}{2}\rtr\mb A\mb B-\frac{1}{2}\sum_{j}\rtr\mathbbm{a}_{j,j}\mathbbm{b}_{j,j}\\
  =&\frac{1}{2}\rtr\mb A\mb B-\frac{1}{4}\sum_{j}\rtr\mathbbm{a}_{j,j}\rtr\mathbbm{b}_{j,j}.
\end{align*}
Here the last third equality used the property of Type-\uppercase\expandafter{\romannumeral1} matrices.

By the same argument, we have
\begin{align*}
  &\re\sum_{j\neq k}\rtr \mathbbm{a}_{j,k} x_k x_j^*\rtr \mathbbm{b}_{k,j}x_j x_k^* \\
  =& \re\sum_{j\neq k}\(a_{j,k}^{lu}\varpi_{j,k}-a_{j,k}^{ru}\bar\omega_{j,k}+a_{j,k}^{ld}\omega_{j,k}+a_{j,k}^{rd}\bar\varpi_{j,k}\)
  \(b_{k,j}^{lu}\bar\varpi_{j,k}+b_{k,j}^{ru}\bar\omega_{j,k}-b_{k,j}^{ld}\omega_{j,k}+b_{k,j}^{rd}\varpi_{j,k}\)\\
  =&\frac{1}{2}\sum_{j\neq k}\(a_{j,k}^{lu}b_{k,j}^{lu}+a_{j,k}^{ru}b_{k,j}^{ld}+a_{j,k}^{ld}b_{k,j}^{ru}+a_{j,k}^{rd}b_{k,j}^{rd}\)\\
  =&\frac{1}{2}\sum_{j\neq k}\rtr\(\left(
                           \begin{array}{cc}
                             a_{j,k}^{lu} & a_{j,k}^{ru}\\
                             a_{j,k}^{ld} & a_{j,k}^{rd} \\
                           \end{array}
                         \right)\left(
                                  \begin{array}{cc}
                                    b_{k,j}^{lu} & b_{k,j}^{ru} \\
                                    b_{k,j}^{ld} & b_{k,j}^{rd} \\
                                  \end{array}
                                \right)
                         \)
  =\frac{1}{2}\sum_{j\neq k}\rtr\mathbbm{a}_{j,k}\mathbbm{b}_{k,j}=\frac{1}{2}\rtr\mb A\mb B-\frac{1}{2}\sum_{j}\rtr\mathbbm{a}_{j,j}\mathbbm{b}_{j,j}\\
  =&\frac{1}{2}\rtr\mb A\mb B-\frac{1}{4}\sum_{j}\rtr\mathbbm{a}_{j,j}\rtr\mathbbm{b}_{j,j}.
\end{align*}
    Combining the argument above, we finally get that
    \begin{align*}
  &\re\(\rtr{\mathbf X}^*{\mathbf A}{\mathbf X}-\rtr {\mathbf A}\)\(\rtr{\mathbf X}^*{\mathbf B}{\mathbf X}-\rtr {\mathbf B}\)=\re \(\|x_1\|_Q^4-\frac{3}{2}\)\sum_{j}\rtr \mathbbm{a}_{j,j}\rtr \mathbbm{b}_{j,j}+\rtr \mb A\mb B.
\end{align*}
The proof of this lemma is complete.
\end{proof}

\begin{lemma}[Theorem 35.12 of Billingsley(1995)]\label{cll7}
Suppose for each $n$ $Y_{n1},Y_{n2},\cdots,Y_{nr_n}$ is a real martingale difference sequence with respect to the increasing $\sigma$-field $\{\mathcal{F}_{nj}\}$ having second moments. If as $n\to\infty$,
\begin{align*}
&(i)\quad\quad\quad\quad\qquad\qquad\sum_{j=1}^{r_n}\re(Y_{nj}^2|\mathcal{F}_{n,j-1})\xrightarrow{i.p.}\sigma^2,\qquad\qquad\qquad\qquad\qquad\qquad\\
&{\rm where}\ \sigma^2\ {\rm is\ a\ positive\ constant}, \ {\rm and\ for\ each}\ \varepsilon\ge0,\\
&(ii)\quad\quad\quad\quad\qquad\qquad\sum_{j=1}^{r_n}\re(Y_{nj}^2I(|Y_{nj}\ge\varepsilon|))\to0,\qquad\qquad\qquad\qquad\qquad\qquad\\
&{\rm then}\\
&\quad\quad\quad\quad\qquad\qquad\qquad\sum_{j=1}^{r_n}Y_{nj}\xrightarrow{D}N(0,\sigma^2).\qquad\qquad\qquad\qquad
\end{align*}
\end{lemma}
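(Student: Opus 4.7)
The statement is the classical martingale central limit theorem in the McLeish--Brown tradition, as recorded in Billingsley's textbook, so there is no truly new mathematics to produce here; what I would do is recall the standard Fourier route by which it is proved. Put $S_n=\sum_{j=1}^{r_n}Y_{nj}$ and $V_n^2=\sum_{j=1}^{r_n}\re(Y_{nj}^2\mid\mathcal{F}_{n,j-1})$. Hypothesis $(i)$ asserts $V_n^2\xrightarrow{i.p.}\sigma^2$ and $(ii)$ is a Lindeberg--type control of the increments. By L\'evy's continuity theorem it is enough to prove that the characteristic function $\varphi_n(t)=\re e^{itS_n}$ converges pointwise to $e^{-\sigma^2 t^2/2}$ for every real $t$.

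First, from $(ii)$ and Markov's inequality one obtains the infinitesimality of the array, namely $\max_{j\le r_n}|Y_{nj}|\xrightarrow{i.p.}0$, because
$$\mathrm{Pr}\(\max_j|Y_{nj}|\ge\varepsilon\)\le\sum_j\mathrm{Pr}\(|Y_{nj}|\ge\varepsilon\)\le\varepsilon^{-2}\sum_j\re\left[Y_{nj}^2 I(|Y_{nj}|\ge\varepsilon)\right]\to 0.$$
This is what will allow a uniform single--term Taylor expansion of the conditional characteristic functions.

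Next, introduce the conditional characteristic functions $U_{nj}(t)=\re(e^{itY_{nj}}\mid\mathcal{F}_{n,j-1})$ and the predictable product $M_n(t)=\prod_{j=1}^{r_n}U_{nj}(t)$. A standard McLeish identity shows that $e^{itS_n}/M_n(t)$ is itself a complex martingale whose terminal value differs from $1$ in $L^1$ by a quantity controlled by $\sum_j\re|e^{itY_{nj}}-U_{nj}(t)|^2$, and $(i)$ combined with $(ii)$ makes this error $o(1)$, so that $\varphi_n(t)-\re M_n(t)\to 0$. Taylor expanding inside each factor gives
$$U_{nj}(t)=1-\tfrac{t^2}{2}\re(Y_{nj}^2\mid\mathcal{F}_{n,j-1})+R_{nj}(t),$$
with $\sum_j|R_{nj}(t)|$ controlled through $(ii)$; taking logarithms and using $(i)$,
$$\log M_n(t)=-\tfrac{t^2}{2}V_n^2+o_{i.p.}(1)\xrightarrow{i.p.}-\tfrac{\sigma^2 t^2}{2}.$$
Since $|M_n(t)|\le 1$, bounded convergence lifts this to $\re M_n(t)\to e^{-\sigma^2 t^2/2}$, whence $\varphi_n(t)\to e^{-\sigma^2 t^2/2}$ and the claimed convergence follows.

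The main technical obstacle is the Taylor remainder estimate: for each $j$ one bounds $|e^{itY_{nj}}-1-itY_{nj}+t^2Y_{nj}^2/2|$ by $\min(t^2Y_{nj}^2,|t|^3|Y_{nj}|^3/6)$ and sums over $j$, splitting according to whether $|Y_{nj}|\le\varepsilon$ or not; the ``small'' half is controlled by $V_n^2$ and the ``large'' half by the Lindeberg sum in $(ii)$. This splitting is precisely where the infinitesimality of the array enters and is the only point at which the two hypotheses interact non--trivially.
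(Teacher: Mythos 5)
You should first note that the paper does not prove this lemma at all: it is quoted verbatim as Theorem 35.12 of Billingsley (1995) and used as a black box in the martingale step of the CLT, so there is no internal argument to compare yours against. Your sketch is the standard characteristic-function route in the McLeish tradition, and its ingredients --- infinitesimality of the array from the Lindeberg sum, the predictable product $M_n(t)=\prod_j U_{nj}(t)$ of conditional characteristic functions, the one-term Taylor expansion with the $\min(t^2y^2,|t|^3|y|^3/6)$ remainder and the $\varepsilon$-splitting --- are exactly the right ones, so as an outline it is sound.

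Two places would need real work if written out, and they are where Billingsley's own proof takes a slightly different path. First, dividing by $M_n(t)$ and taking $\log U_{nj}(t)$ presupposes that each factor $U_{nj}(t)$ is nonvanishing and uniformly close to $1$, and that the ratio martingale $\prod_j e^{itY_{nj}}/U_{nj}(t)$ is integrable; this is not automatic and is normally arranged by a preliminary stopping-time truncation that caps the accumulated conditional variance (stop the sum once $\sum_{i\le j}\E(Y_{ni}^2\mid\mathcal{F}_{n,i-1})$ exceeds a fixed level, prove the CLT for the stopped array, then remove the stopping using hypothesis (i)). Relatedly, the uniform smallness of $\max_j\E(Y_{nj}^2\mid\mathcal{F}_{n,j-1})$, which is what actually licenses the expansion of each $U_{nj}(t)$, must itself be extracted from (ii) by the same split; your $\max_j|Y_{nj}|\xrightarrow{i.p.}0$ alone does not give it. Billingsley avoids logarithms altogether and compares $\E e^{itS_n}$ directly with $\E e^{-t^2V_n^2/2}$ through a telescoping product identity, which is precisely how he sidesteps the vanishing-factor issue. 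None of this is a conceptual gap --- it is the standard bookkeeping for this theorem --- but as written, the steps ``$\varphi_n(t)-\E M_n(t)\to0$'' and the subsequent logarithm are assertions rather than proofs. (Incidentally, the paper's transcription of the statement contains two typos, $\varepsilon\ge0$ for $\varepsilon>0$ and $I(|Y_{nj}\ge\varepsilon|)$ for $I(|Y_{nj}|\ge\varepsilon)$; your reading of the hypotheses is the intended one.)
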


\end{document}